\title[Improved QuickSort right-tail upper bounds]
{
QuickSort:\ Improved right-tail asymptotics for the limiting distribution, \\ and large deviations}
\newcommand\urladdrx[1]{{\urladdr{\def~{{\tiny$\sim$}}#1}}}
\author{James Allen Fill}
\address{Department of Applied Mathematics and Statistics,
The Johns Hopkins University,
3400 N.~Charles Street,
Baltimore, MD 21218-2682 USA}
\email{jimfill@jhu.edu}
\thanks{Research of both authors supported by 
the Acheson~J.~Duncan Fund for the Advancement of Research in
Statistics.}
\author{Wei-Chun Hung}
\address{Department of Applied Mathematics and Statistics,
The Johns Hopkins University,
3400 N.~Charles Street,
Baltimore, MD 21218-2682 USA}
\email{whung6@jhu.edu}
\subjclass[2010]{Primary: 68P10; Secondary: 60E05, 60C05} 
\numberwithin{equation}{section}
\theoremstyle{plain}
\newtheorem{theorem}{Theorem}[section]
\newtheorem{lemma}[theorem]{Lemma}
\newtheorem{proposition}[theorem]{Proposition}
\theoremstyle{definition}
\newtheorem{remark}[theorem]{Remark}
\theoremstyle{remark}
\newenvironment{romenumerate}[1][-10pt]{
\addtolength{\leftmargini}{#1}\begin{enumerate}
 }{\end{enumerate}}
\newcounter{oldenumi}
{\setcounter{oldenumi}{\value{enumi}}
\begin{romenumerate} \setcounter{enumi}{\value{oldenumi}}}
{\end{romenumerate}}
\newcounter{thmenumerate}
\newcounter{xenumerate}   
\newcommand{\refT}[1]{Theorem~\ref{#1}}
\newcommand{\refL}[1]{Lemma~\ref{#1}}
\newcommand{\refR}[1]{Remark~\ref{#1}}
\newcommand{\refS}[1]{Section~\ref{#1}}
\newcommand\marginal[1]{\marginpar{\raggedright\parindent=0pt\tiny #1}}
\newcommand\REM[1]{{\raggedright\texttt{[#1]}\par\marginal{XXX}}}
\xdef\klockan{\the\count1.0\the\count255}
\xdef\klockan{\the\count1.\the\count255}\fi
\newcommand\noqed{\renewcommand{\qed}{}} 
\def\rompar(#1){\textup(#1\textup)}    
\def\xexp(#1){e^{#1}}
\newcommand\bbR{\mathbb R}
\newcounter{CC}
\newcounter{cc}
\renewcommand\L{\operatorname{L}}
\newcommand\Var{\operatorname{Var}}
\newcommand\Fbar{\overline F}
\newcommand\Fu{\underbar{$F$}}
\newcommand{\mgf}{moment generating function}
\newcommand\hpsi{\widehat \psi}
\newcommand\hZ{\widehat Z}
\newcommand\doi{D_{01}}
\newcommand\tw{\widetilde w}
\newcommand{\ignore}[1]{}
\newcommand{\ro}[1]{\uppercase\expandafter{\romannumeral #1}}
\begin{document}

\maketitle

\vspace{-.3in}
\begin{center}
January~18, 2019
\end{center}
\vspace{.1in}

\begin{abstract}
We substantially refine asymptotic logarithmic upper bounds
produced by Svante Janson (2015) on the right tail of the limiting {\tt QuickSort} distribution function~$F$ and by Fill and Hung (2018) on the right tails of the corresponding density~$f$ and of the absolute derivatives of~$f$ of each order.  
For example, we establish an upper bound on $\log[1 - F(x)]$ that matches conjectured asymptotics of Knessl and Szpankowski (1999) through terms of order $(\log x)^2$; the corresponding order for the Janson (2015) bound is the lead order, $x \log x$. 

Using the refined asymptotic bounds on~$F$, we derive right-tail large deviation (LD) results for the distribution of the number of comparisons required by {\tt QuickSort} that 
substantially sharpen 
the two-sided LD results of McDiarmid and Hayward (1996).
\end{abstract}

\medskip
\par\noindent
\begin{small}
{\bf Keywords:\ 	}{\tt QuickSort}, asymptotic bounds, tails of distributions, large deviations, moment generating functions, Chernoff bounds
\end{small}

    
\section{Introduction}
\label{S:intro}
To set the stage, and for the reader's convenience, we repeat here relevant portions of Section~1 of Fill and Hung~\cite{fill2019density_tails}.
Let $X_{n}$ denote the (random) number of comparisons when sorting~$n$ distinct numbers using the algorithm {\tt QuickSort}. Clearly $X_0 = 0$, and for $n \geq 1$ we have the recurrence relation
\[
X_{n} \overset{\mathcal{L}}{=} X_{U_{n} - 1} + X^{*}_{n-U_{n}} + n - 1,
\]
where $\overset{\mathcal{L}}{=}$ denotes equality in law (i.e.,\ in distribution); 
$X_{k} \overset{\mathcal{L}}{=} X^{*}_{k}$; the random variable $U_{n}$ is uniformly distributed on 
$\{1,\dots,n\}$; and $U_{n}, X_{0}, \dots , X_{n-1}$, $X^{*}_{0}, \dots , X^{*}_{n-1}$ are all independent. It is well known that 
\[
\mu_n := \mathbb{E}X_{n} = 2\left(n+1\right)H_{n}-4n,
\] where $H_{n}$ is the $n$th harmonic number $H_n := \sum_{k=1}^{n} k^{-1}$ and (from a simple exact expression) that $\Var X_n = (1 + o(1)) (7 - \frac{2 \pi^2}{3}) n^2$. To study distributional asymptotics, we first center and scale $X_{n}$ as follows:
\begin{equation}
\label{Zn}
Z_{n} = \frac{X_{n}-\mu_n}{n}.
\end{equation}
Using the Wasserstein $d_{2}$-metric, R\"osler \cite{rosler1991limit} proved that $Z_{n}$ converges to $Z$ weakly as $n \rightarrow \infty$.  Using a martingale argument, R\'egnier \cite{regnier1989limiting} proved that the slightly renormalized $\frac{n}{n + 1} Z_n$ converges to $Z$ in $L^{p}$ for every finite~$p$, and thus in distribution; equivalently, the same conclusions hold for $Z_n$.  The random variable~$Z$ has everywhere finite moment generating function with $\mathbb{E}Z = 0$ and 
$\Var Z = 7-\left(2\pi ^{2}/3\right)$. Moreover, $Z$ satisfies the distributional identity
\begin{equation}
\label{distid}
Z \overset{\mathcal{L}}{=} U Z + (1-U) Z^* + g(U).
\end{equation}
On the right, $Z^* \overset{\mathcal{L}}{=} Z$; $U$ is uniformly distributed on $\left(0,1\right)$; $U, Z, Z^*$ are independent; and 
\[
g(u) := 2u \ln u + 2 (1-u) \ln (1-u) + 1.
\]
Further, the distributional identity together with the condition that $\mathbb{E}Z$ (exists and) vanishes characterizes the limiting {\tt Quicksort} distribution; this was first shown by 
R\"osler~\cite{rosler1991limit} under the additional condition that $\Var Z < \infty$, and later in full by Fill and Janson~\cite{fill2000fixedpoints}.

Fill and Janson \cite{fill2000smoothness} derived basic properties of the limiting {\tt QuickSort} distribution $\mathcal{L}(Z)$.  In particular, they proved that $\mathcal{L}(Z)$ has a (unique) continuous density~$f$ which is everywhere positive and infinitely differentiable.

Janson~\cite{janson2015tails} studied logarithmic asymptotics in both tails for the corresponding distribution function~$F$, and Fill and Hung~\cite{fill2019density_tails} did the same for~$f$ and each of its derivatives.  For right tails, all these results can be summarized in the following theorem.
We let $\Fbar(x) := 1 - F(x)$, and for a function $h:\bbR\to\bbR$ we write
\begin{equation}
\label{xnorm}
\|h\|_x := \sup_{t \geq x} |h(t)|.
\end{equation}
 
\begin{theorem}[\cite{janson2015tails}, Thm.~1.1; \cite{fill2019density_tails}, Thms.~1.1--1.2]
\label{T:oldmain}
\ \\
{\rm (a)}~As $x \rightarrow \infty$, the limiting {\tt QuickSort} density function~$f$ satisfies
\begin{equation}
\label{oldright}
\exp [-x\ln x - x\ln \ln x + O(x)] 
\leq f(x) \leq \exp [- x \ln x + O(x)].
\end{equation}
{\rm (b)}~Given an integer $k \geq 0$, as $x \rightarrow \infty$ the $k^{\rm th}$ derivative of the limiting 
{\tt QuickSort} distribution function~$F$ satisfies
\begin{equation}
\label{koldright}
\exp [-x \ln x - (k \vee 1) x \ln \ln x + O(x)] 
\leq \| \Fbar^{\left(k\right)} \|_x \leq \exp [- x \ln x + O(x)].
\end{equation}
\end{theorem}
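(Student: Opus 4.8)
The plan is to obtain the upper bounds from the everywhere-finite moment generating function $M(t):=\E\,e^{tZ}$---through a Chernoff bound for $F$ and through Laplace inversion for $f$ and its derivatives---and to obtain the lower bounds by exhibiting, via the distributional identity~\eqref{distid}, an explicit rare event on which $Z$ is large. Conditioning~\eqref{distid} on $U$ and using $Z^{*}\eqd Z$ with $U,Z,Z^{*}$ independent gives the functional equation
\begin{equation}
\label{Mfe}
M(t)=\int_{0}^{1}e^{t\,g(u)}\,M(tu)\,M\bigl(t(1-u)\bigr)\,\dd u,
\end{equation}
and the properties I would use are that $g\le1$ on $[0,1]$ with equality only at the endpoints, that $g(u)=1-O\!\bigl(u\ln\tfrac1u\bigr)$ as $u\downarrow0$, and that $M(s)=1+O(s^{2})$ near $0$ (since $\E Z=0$).

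For the right-hand inequalities (upper bounds): a bootstrap (continuity-in-$t$) argument applied to~\eqref{Mfe} with $g\le1$ shows that $\ln M(t)\le e^{t+C}$ for some constant $C$ and all large $t$---the point being that the mass of the integral concentrates near $u\in\{0,1\}$, where one factor is $M(tu)\approx1$ while the other, $M(t-tu)$, is already exponentially smaller than $M(t)$ once $u$ leaves a window of width $\asymp e^{-t}$ about the endpoint. Feeding this into the Chernoff bound $\Fbar(x)\le e^{-tx}M(t)$ and optimizing at $t=\ln x-C$ gives $\Fbar(x)\le\exp[-x\ln x+O(x)]$, which (as $\Fbar$ is decreasing) equals $\|\Fbar\|_{x}$; this is the case $k=0$ of~\eqref{koldright}. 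For $f$ and its derivatives I would pass to the complex plane: since $M$ is finite on all of $\bbR$ it extends to an entire function, and---by a functional-equation argument of the same flavour, the one essentially underlying the Fill--Janson proof~\cite{fill2000smoothness} that $f\in C^{\infty}$---$M$ decays faster than any power of $|y|$ along each vertical line $\Re s=c\ge0$, controlled enough that $\int_{\bbR}|c+\ii y|^{k-1}\,|M(c+\ii y)|\,\dd y$ is at most a polynomial in $c$ times $M(c)$. The Laplace inversion $f^{(k-1)}(x)=\tfrac1{2\pi\ii}\int_{\Re s=c}e^{-xs}s^{k-1}M(s)\,\dd s$ is then legitimate for every $c>0$, so $|f^{(k-1)}(x)|\le e^{-xc}\int_{\bbR}|c+\ii y|^{k-1}|M(c+\ii y)|\,\dd y$, which by the vertical-line control is at most $e^{-xc}$ times a polynomial in $c$ times $M(c)\le\exp(e^{c+C})$; the choice $c=\ln x-C$ then gives $\exp[-x\ln x+O(x)]$. (The same bound can instead be bootstrapped through the density's own fixed-point equation derived from~\eqref{distid}, where the relevant convolution concentrates at a lopsided configuration, but the contour shift is cleaner.) Because $t\mapsto t\ln t-C't$ is eventually increasing, $\sup_{t\ge x}\exp[-t\ln t+O(t)]$ is attained at $t=x$, which gives the right-hand inequalities in~\eqref{oldright} and~\eqref{koldright}.

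For the left-hand inequalities (lower bounds): fix large $x$ and a short interval $I$ near $\tfrac1{x\ln x}$. Conditioning~\eqref{distid} on $\{U\in I\}$ and on $\{Z'\ge-1\}$ gives
\[
\Fbar(x)\;\ge\;\P(Z\ge-1)\int_{I}\Fbar\!\Bigl(\tfrac{x-g(u)+u}{1-u}\Bigr)\dd u\;\ge\;c\,|I|\;\Fbar\bigl(x-1+O(\tfrac1{\ln x})\bigr),
\]
since $g(u)=1-O\!\bigl(u\ln\tfrac1u\bigr)$ and $\tfrac1{1-u}=1+O(u)$ for $u\in I$. Iterating this $\asymp x$ times, with the same $I$ at each step, drives the argument down to a fixed constant (where $\Fbar>0$) and multiplies in $\asymp x$ factors of size $c\,|I|\asymp\tfrac1{x\ln x}$, whence $\Fbar(x)\ge\exp[-x\ln x-x\ln\ln x+O(x)]$---the case $k=0$ of~\eqref{koldright}. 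The scale $\tfrac1{x\ln x}$ is the sweet spot: it is small enough that each step reduces the argument by $1+o(1)$, so that $\asymp x$ (and not more) iterations are needed, yet not so small that the per-step cost $\ln\tfrac1{|I|}=\ln x+\ln\ln x+O(1)$ grows---the two effects balancing to total cost $x(\ln x+\ln\ln x)+O(x)$. Running the same construction while propagating the conditional law of $Z$---which, on the good event, is a rescaled copy of $f$ evaluated at a point in $[-O(1),\,O(x/\ln x)]$---yields $f(x)\ge\exp[-x\ln x-x\ln\ln x+O(x)]$, the left-hand inequality in~\eqref{oldright}; this also covers $k=1$ in~\eqref{koldright}, since $\|\Fbar^{(1)}\|_{x}=\|f\|_{x}\ge f(x)$. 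Finally, for $k\ge2$ the left-hand inequality in~\eqref{koldright} follows from the density bound by an elementary divided-difference argument: the already-proved upper bound $\|f^{(k-2)}\|_{t}\le\exp[-t\ln t+O(t)]$ forces $|f^{(k-2)}|$ to fall from $\ge\exp[-x\ln x-x\ln\ln x+O(x)]$ at some point $\ge x$ to half of that a distance $O\!\bigl(\tfrac{x\ln\ln x}{\ln x}\bigr)$ further out, and dividing by that distance costs only $O(\ln x)=O(x)$ in the exponent---so in fact even the stronger bound with $x\ln\ln x$ in place of $kx\ln\ln x$ holds.

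The step I expect to be hardest is making $\ln M(t)\le e^{t+C}$---and, likewise, the constrained-convolution estimate for the density, or the vertical-line decay of $M$ with enough uniformity in $c$ to push the contour out to $c\asymp\ln x$---fully rigorous: the naive estimate for the near-endpoint part of~\eqref{Mfe} is circular (it bounds $M(t)$ in terms of $M(t)$), so the bootstrap must be started from a crude a priori bound and one must quantify that the gain beats the loss uniformly in $t$. A secondary matter is the bookkeeping needed to keep the various $O(x)$ error terms genuinely $O(x)$ across the $\asymp x$ iterations in the lower-bound construction.
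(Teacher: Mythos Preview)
This theorem is not proved in the present paper; it is quoted from Janson~\cite{janson2015tails} and Fill--Hung~\cite{fill2019density_tails}, and the paper only goes on to sharpen its upper bounds. So there is no in-paper proof to compare against, but the paper does reveal enough about the original arguments to assess your plan.

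Where your plan matches the originals: the upper bound on $\Fbar$ is obtained exactly as you say---Janson's Lemma~6.1 establishes $\psi(t)\le\exp(e^{t}+at)$ by a bootstrap on the integral equation for~$\psi$ (this is the bound~\eqref{Jansonpsibound} recalled here), and Chernoff at $t\approx\ln x$ gives~\eqref{koldright} for $k=0$. Your iterated-conditioning lower bound for $\Fbar$, with the window $|I|\asymp(x\ln x)^{-1}$, is likewise Janson's construction, and the density lower bound in~\cite{fill2019density_tails} is obtained by feeding the same event through the density recursion, as you propose.

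Where your plan differs: for the derivative upper bounds, the route actually taken in~\cite{fill2019density_tails} is not Laplace inversion but a Landau--Kolmogorov-type interpolation---their Proposition~6.1, invoked in the present paper's proof of \refT{T:newmain}(b), passes from $\|\Fbar^{(k)}\|_x$ to $\|\Fbar^{(k+1)}\|_x$ at the cost of $o(r(x))$ in the log whenever $r(x)=\omega(\sqrt{x\log x})$. Your contour-shift alternative is attractive, but the hard step you yourself flag is genuinely missing: the Fill--Janson smoothness results~\cite{fill2000smoothness} control $|\psi(\ii y)|$ only on the imaginary axis, and nothing in the literature gives the uniform-in-$c$ vertical-line decay you would need to push the contour out to $\Re s\asymp\ln x$. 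The interpolation route bypasses this entirely and needs only the real-axis bound on~$\psi$ already in hand.

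Finally, your divided-difference argument for the lower bounds when $k\ge2$ is essentially the Landau--Kolmogorov idea run in the other direction, and your observation that it yields coefficient~$1$ rather than $k\vee1$ in front of $x\ln\ln x$ is correct: that sharpening is exactly what \refT{T:XAmain}(b) records, so the $(k\vee1)$ in~\eqref{koldright} is an artifact of a coarser bookkeeping in~\cite{fill2019density_tails} that your plan (and~\cite{fill2018XA}) improves.
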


As discussed in \cite[Section~1]{janson2015tails} and in \cite[Remark 1.3(b)]{fill2019density_tails}, 
non-rigorous arguments of Knessl and Szpankowski~\cite{knessl1999quicksort} suggest very refined asymptotics, which to three logarithmic terms assert that for each $k \geq 0$ we have
\begin{equation}
\label{Fkbar asymptotics}
\Fbar^{(k)}(x) = \exp[- x \ln x - x \ln \ln x + (1 + \ln 2) x + o(x)]
\end{equation}
as $x \to \infty$ (and hence that the same asymptotics hold for $\| \Fbar^{\left(k\right)} \|_x$).  Note that for $k = 0, 1$ these expansions match the lower bounds on~$f$ and~$\Fbar$ in \refT{T:oldmain} to two logarithmic terms.

In an earlier extended-abstract version~\cite{fill2018XA} of this paper, we refined the upper bounds of \refT{T:oldmain} to match~\eqref{Fkbar asymptotics}, and we were also able to improve the lower bound in~\eqref{koldright} to match~\eqref{Fkbar asymptotics} to two terms.  Here is the main theorem of~\cite{fill2018XA}:

\begin{theorem}[\cite{fill2018XA}, Thm.~1.2]
\label{T:XAmain}
{\rm (a)}~As $x \rightarrow \infty$, the limiting {\tt QuickSort} density function~$f$ satisfies
\begin{align}
\label{RL}
\lefteqn{\hspace{-0.5in}\exp [-x\ln x - x\ln \ln x + O(x)] \leq f(x)} \\
\label{RU} 
&\leq \exp [- x \ln x - x \ln \ln x +(1 + \ln 2) x + o(x)].
\end{align}
{\rm (b)}~Given an integer $k \geq 0$, as $x \rightarrow \infty$ the $k^{\rm th}$ derivative of the limiting 
{\tt QuickSort} distribution function~$F$ satisfies
\begin{align}
\label{kRL}
\lefteqn{\hspace{-0.5in}\exp [-x \ln x - x \ln \ln x + O(x)] \leq \| \Fbar^{\left(k\right)} \|_x} \\
\label{kRU} 
&\leq \exp [- x \ln x - x \ln \ln x + (1 + \ln 2) x + o(x)].
\end{align}
\end{theorem}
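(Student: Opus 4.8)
\smallskip
\noindent\emph{Proof plan.}\ \ The bound \eqref{RL} is literally the lower bound in \refT{T:oldmain}(a), so there is nothing new there; and \eqref{RU} is the case $k=1$ of \eqref{kRU}, since $\Fbar^{(1)}=-f$ gives $f(x)\le\|f\|_x=\|\Fbar^{(1)}\|_x$. So the plan reduces to establishing \eqref{kRL} and \eqref{kRU} for every integer $k\ge0$, with \eqref{kRU} carrying essentially all of the work. Throughout write $M_Z(\lambda):=\E e^{\lambda Z}$, finite for every real $\lambda$.

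For \eqref{kRL} I would induct on $k$, the cases $k=0,1$ being already contained in \refT{T:oldmain}(b). For the inductive step, choose $t^\ast\ge x$ with $|\Fbar^{(k-1)}(t^\ast)|=\|\Fbar^{(k-1)}\|_x$ (possible since $\Fbar^{(k-1)}$ is continuous and tends to $0$ by \refT{T:oldmain}(b)). Since $\Fbar^{(k)}$ is continuous, the fundamental theorem of calculus gives $|\Fbar^{(k-1)}(t^\ast)|\le|\Fbar^{(k-1)}(t^\ast+x)|+x\,\|\Fbar^{(k)}\|_x$. Here $|\Fbar^{(k-1)}(t^\ast+x)|\le\|\Fbar^{(k-1)}\|_{2x}\le\exp[-2x\ln x+O(x)]$ by the crude upper bound of \refT{T:oldmain}(b), which by the inductive hypothesis $\|\Fbar^{(k-1)}\|_x\ge\exp[-x\ln x-x\ln\ln x+O(x)]$ is at most $\tfrac12\|\Fbar^{(k-1)}\|_x$ for $x$ large (as $-2x\ln x$ is far more negative than $-x\ln x-x\ln\ln x$). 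Hence $\|\Fbar^{(k)}\|_x\ge\tfrac1{2x}\|\Fbar^{(k-1)}\|_x\ge\exp[-x\ln x-x\ln\ln x+O(x)]$, closing the induction.

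For \eqref{kRU} the starting point is the distributional identity \eqref{distid}, which yields the integral recursion
\[
\Fbar(x)=2\int_0^{1/2}\E_Z\!\left[\Fbar\!\left(\tfrac{x-g(u)-uZ}{1-u}\right)\right]du ,
\]
and, after $k$ differentiations (as in the derivative recursions of \cite{fill2019density_tails}), the analogous linear recursion for $\Fbar^{(k)}$ with an extra kernel factor $(1-u)^{-k}$, equal to $1+O(u)$ on the small-$u$ window that turns out to dominate. I would run a monotone bootstrap. Start from $\Fbar(y)\le e^{-\psi_0(y)}$ with $\psi_0(y)=y\ln y+O(y)$ (\refT{T:oldmain}). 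Given a convex increasing trial exponent $\psi_n$ with $\Fbar\le e^{-\psi_n}$, bound the inner expectation using the tangent‑line inequality $\psi_n(t)\ge\psi_n(x)+\psi_n'(x)(t-x)$ (treating separately the rare event that the argument drops below the range where the trial bound is available, where $\Fbar\le1$ and the rapid growth of $\psi_n$ make the contribution negligible); this converts the $Z$‑average into $M_Z$ evaluated at the parameter $\psi_n'(x)u/(1-u)$, which is $1+o(1)$ on the relevant window. A Laplace estimate of the remaining $u$‑integral --- concentrated at $u\asymp(\psi_n'(x)\,x)^{-1}$, where $g(u)=1-2u\ln(1/u)+O(u)$ is effectively $1$ --- then gives
\[
\Fbar(x)\le\frac{2\,e^{\psi_n'(x)}}{\psi_n'(x)\,x}\,(1+o(1))\,e^{-\psi_n(x)}=:e^{-\psi_{n+1}(x)} .
\]
One checks that $\psi_{n+1}\ge\psi_n$ for $x$ large, so the inequality $\Fbar\le e^{-\psi_n}$ propagates; the limiting exponent $\psi_\infty$ then satisfies $e^{\psi_\infty'(x)}=\tfrac12\psi_\infty'(x)\,x\,(1+o(1))$, i.e.\ $\psi_\infty'(x)=\ln x+\ln\ln x-\ln2+o(1)$, and integrating gives $\psi_\infty(x)=x\ln x+x\ln\ln x-(1+\ln2)x+o(x)$. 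Hence $\Fbar(x)\le\exp[-x\ln x-x\ln\ln x+(1+\ln2)x+o(x)]$; since $\|\Fbar\|_x=\Fbar(x)$ and the extra $(1-u)^{-k}$ kernel factors are harmless on the dominant window, the same argument yields \eqref{kRU} for every $k$ (and hence \eqref{RU}).

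The hard part will be making the last paragraph's Laplace/bootstrap analysis rigorous to the stated precision. Concretely I expect the work to be: (i) controlling the non‑dominant ranges --- the ``middle'' $u\in[\eps,1-\eps]$, and the part of the $Z$‑average where $M_Z$ is large (there it is beaten by the factor $e^{-\psi_n'(x)xu}$) --- and showing that the $O(u\ln(1/u))$ correction to $g$ near $u=0$ and the second‑order term in the tangent‑line bound each contribute only $e^{o(1)}$ on the dominant window; (ii) verifying that the iteration is genuinely monotone and converges to a fixed point of the asserted form, which rests on the concavity of $p\mapsto\ln(px/2)-p$; and (iii) checking that the differentiated recursions have the leading structure used above, with the extra factors equal to $1+O(u)$ where it matters. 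It is exactly getting all of these errors under control, rather than settling for an $O(x)$ term, that pins down the constant $1+\ln2$.
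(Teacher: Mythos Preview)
This theorem is quoted from~\cite{fill2018XA} rather than proved in the present paper, but from the discussion around~\eqref{XApsibound} one sees that the method there---and the method used here to prove the sharper \refT{T:newmain}---goes through the moment generating function: one exhibits a trial function~$\hpsi$ satisfying the integral equation $\psi(t)=2\int_0^{1/2}\psi(ut)\psi((1-u)t)e^{tg(u)}\,du$ with strict inequality for all large~$t$ (\refL{L:contranewer}), upgrades this by a one-shot infimum/continuity argument to $\psi(t)\le e^{a't}\hpsi(t)$ (\refP{P:newerpsibound}), and finally applies Chernoff.  The derivative bounds $k\ge1$ are obtained from the $k=0$ case by a separate Landau--Kolmogorov step (\cite[Prop.~6.1]{fill2019density_tails}), not by running the recursion on~$\Fbar^{(k)}$.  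Your route is genuinely different: you work with the recursion for~$\Fbar$ itself, and your tangent-line step is a Chernoff inequality applied \emph{inside} the recursion, so the two schemes are Legendre-dual.  The paper's version is tidier because the~$\psi$ equation carries no inner $Z$-expectation and the comparison argument avoids iteration altogether.  Your induction for~\eqref{kRL} via the fundamental theorem of calculus is correct.

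The bootstrap for~\eqref{kRU}, however, has a real gap.  With $\psi_0(x)=x\ln x+c_0x$ you have $\psi_0'(x)=\ln x+O(1)$, and your update is $\psi_{n+1}(x)-\psi_n(x)=\ln\bigl(\psi_n'(x)\,x/2\bigr)-\psi_n'(x)+o(1)$.  Differentiating, $\psi_{n+1}'-\psi_n'=\psi_n''/\psi_n'+1/x-\psi_n''=O\bigl(1/(x\ln x)\bigr)$, so $\psi_n'$ stays at $\ln x+O(1)$ and the per-step gain stays at $\ln\ln x+O(1)$ for \emph{every}~$n$.  Closing the $x\ln\ln x$ gap between~$\psi_0$ and the target therefore requires on the order of~$x$ iterations at each~$x$; over that many steps neither the $o(1)$ Laplace remainders nor the convexity of~$\psi_n$ are controlled, and $\psi_n'$ never approaches the fixed-point value $w(x)\approx\ln x+\ln\ln x$ that your sentence ``the limiting exponent $\psi_\infty$ then satisfies\ldots'' invokes.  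The repair is to drop the iteration and do on the $\Fbar$ side what the paper does on the~$\psi$ side: guess $\psi_*(x)=x\ln x+x\ln\ln x-(1+\ln2+\eps)x$, use your (correct) Laplace analysis \emph{once} to verify that plugging $e^{-\psi_*}$ into the recursion returns something strictly smaller for all large~$x$, and then run the infimum/continuity argument of \refP{P:newerpsibound}.  Your Laplace computation is the right engine; it is only the iterative packaging that fails.
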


In this paper we substantially refine the upper bound
\begin{equation}
\label{RUF}
\Fbar(x) \leq \exp [- x \ln x - x \ln \ln x +(1 + \ln 2) x + o(x)]
\end{equation}
of \refT{T:XAmain}(b) with $k = 0$; we also improve the upper bounds for $k \geq 1$, though not as dramatically.  

Let
\begin{equation}
\label{Jdef}
J(t) := \int_{s = 1}^t\!\frac{2 e^s}{s}\,ds, \quad t \geq 1.
\end{equation}
It is elementary using integration by parts that $J(t)$ has the (divergent) asymptotic expansion
\begin{equation}
\label{Jexpansion}
J(t) \sim 2 t^{-1} e^t \sum_{j = 0}^{\infty} j!\,t^{-j}.
\end{equation}
Here is the main theorem of this paper:

\begin{theorem}
\label{T:newmain}
For $x \geq 2 e$, let $w \equiv w(x)$ denote the unique 
real solution satisfying $w \geq 1$ to
\begin{align*}
x &= 2 w^{-1} e^w.
\end{align*}

{\rm (a)}~As $x \rightarrow \infty$, the limiting {\tt QuickSort} distribution function~$F$ satisfies
\begin{align*}
\Fbar(x) 
&\leq \exp[- x w + J(w) - w^2 + O(\log x)] \\
&= \exp[- 2 e^w +J(w) - w^2 + O(w)].
\end{align*}

{\rm (b)}~Given an integer $k \geq 1$, as $x \rightarrow \infty$ the $k^{\rm th}$ derivative of the limiting 
{\tt QuickSort} distribution function~$F$ satisfies
\begin{equation}
\label{kRU:improvement} 
\| \Fbar^{\left(k\right)} \|_x \leq \exp [- x w + J(w) + O(\sqrt{x \log x})].
\end{equation}
\end{theorem}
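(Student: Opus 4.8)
For part~(a) the plan is a Chernoff bound powered by a sharp upper estimate of the moment generating function $\phi(\lambda) := \E e^{\lambda Z}$, which is finite for all real~$\lambda$ and, by taking expectations in~\eqref{distid} after conditioning on~$U$, satisfies the fixed-point relation
\begin{equation*}
\phi(\lambda) = \E\bigl[e^{\lambda g(U)}\,\phi(\lambda U)\,\phi(\lambda(1 - U))\bigr].
\end{equation*}
Markov's inequality gives $\Fbar(x) \le e^{-\lambda x}\phi(\lambda)$ for every $\lambda > 0$, hence $\ln \Fbar(x) \le \inf_{\lambda > 0}[\ln\phi(\lambda) - \lambda x]$. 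Since $J'(t) = 2 t^{-1}e^t$, the defining equation $x = 2 w^{-1}e^w$ says exactly that $w(x)$ is the value at which $J'$ equals~$x$; so if one can show
\begin{equation*}
\ln\phi(\lambda) \le J(\lambda) - \lambda^2 + O(\lambda) \qquad\text{as } \lambda \to \infty,
\end{equation*}
then, because $J''(\lambda) = 2\lambda^{-2}(\lambda - 1)e^\lambda$ is enormous, the minimizing~$\lambda$ is $w$ up to an exponentially small correction, and substituting yields $\ln\Fbar(x) \le -xw + J(w) - w^2 + O(w)$; the two forms in part~(a) then follow from $\log x = w - \log w + O(1)$ and $xw = 2e^w$.

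Thus part~(a) reduces to the moment generating function bound displayed above, which is the heart of the matter. The approach is to construct a \emph{supersolution} $\Psi$ of the fixed-point relation: a function with $\Psi(\lambda) = J(\lambda) - \lambda^2 + O(\lambda)$ for large~$\lambda$, taken large enough on a compact initial interval that $\phi \le e^{\Psi}$ there, and satisfying
\begin{equation*}
\E\bigl[\exp\bigl(\lambda g(U) + \Psi(\lambda U) + \Psi(\lambda(1-U))\bigr)\bigr] \le e^{\Psi(\lambda)}
\end{equation*}
for all large~$\lambda$; a bootstrap argument then forces $\phi \le e^{\Psi}$ everywhere (this step needs some care, since $\lambda U$ and $\lambda(1-U)$ range over all of $(0,\lambda)$ rather than a discrete set). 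Verifying the supersolution inequality is a Laplace-method estimate: since $\Psi$ is convex and super-exponentially large, $\Psi(\lambda U) + \Psi(\lambda(1-U))$ is maximized at $U \in \{0,1\}$, where it equals $\Psi(\lambda)$ and where $g(U) = 1$; expanding near the two endpoints, using $g(1 - v) = 1 - 2v + 2v\ln v + O(v^2)$ and $\Psi'(\lambda) \approx J'(\lambda)$, shows that the expectation concentrates on a window of width $\asymp \lambda e^{-\lambda}$ and reproduces $e^{\Psi(\lambda)}$ at leading order --- precisely the cancellation $1 \approx 2e^\lambda / (\lambda\,\Psi'(\lambda))$, i.e.\ $\Psi' \approx J'$, that singles out the profile~$J$. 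The main obstacle is the bookkeeping of lower-order terms: one must carry the second-order corrections of this endpoint expansion (equivalently, track how corrections accumulate over the $\asymp\log x$ effective levels of the recursion) accurately enough to recover the full $-\lambda^2$, since that is exactly what produces the new $-w^2$ term. (Alternatively one can bypass $\phi$ and work directly with the self-consistent recursion that~\eqref{distid} induces for $\Fbar$ in the right tail, where the same~$J$ and~$w$ appear.)

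For part~(b), note $|\Fbar^{(k)}(t)| = |f^{(k-1)}(t)|$ for $k \ge 1$, and use that $\phi$ is entire together with the rapid decay of the characteristic function to shift the Laplace-inversion contour to $\{\Re s = w\}$, giving
\begin{equation*}
|f^{(k-1)}(t)| \le \frac{e^{-wt}}{2\pi}\int_{-\infty}^{\infty} |w + iu|^{k-1}\,|\phi(w + iu)|\,du .
\end{equation*}
Since $e^{-wt} \le e^{-wx}$ for $t \ge x$, this bounds $\|\Fbar^{(k)}\|_x$ by $e^{-wx}$ times that integral; one then uses $|\phi(w + iu)| \le \phi(w) \le e^{J(w) + O(w)}$ together with decay in~$u$ of the tilted characteristic function $u \mapsto \phi(w + iu)/\phi(w)$ --- drawing on the smoothness estimates of~\cite{fill2000smoothness} and~\cite{fill2019density_tails}, and on the cruder all-orders bounds of \refT{T:oldmain} to control integrals like $\int e^{wz}|f^{(j)}(z)|\,dz$ --- to make the integral finite and absorb it into the exponent. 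The coarser Fourier-analytic input here, in place of the precise saddle-point of part~(a), is why one only obtains $O(\sqrt{x\log x})$ and loses the $-w^2$; optimizing the cutoff between the trivial bound and the decay regime gives the stated exponent.
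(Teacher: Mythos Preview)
Your plan for part~(a) is essentially the paper's approach. The paper proves the mgf bound $\psi(t)\le\exp[J(t)-t^2+at]$ (Proposition~\ref{P:newerpsibound}) by exactly the supersolution-plus-bootstrap scheme you describe: it exhibits the explicit comparison function $e^{a't}\hpsi(t)$ with $\hpsi(t)=(1-e^{-t/2})\exp[J(t)-t^2-\alpha t-\ln t]$, verifies the strict supersolution inequality by a Laplace expansion near the endpoints (\refL{L:contranewer}), and then runs the continuity/infimum contradiction argument. The $(1-e^{-t/2})$ factor is the device that makes the inequality \emph{strict}, which is what the bootstrap needs. Part~(a) is then the Chernoff bound at $t=w$.

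For part~(b) your approach is genuinely different from the paper's, and it has a gap. The paper does no Laplace inversion or contour shifting at all: it simply notes that the $k=0$ case of~\eqref{kRU:improvement} is weaker than part~(a), and then inducts on~$k$ by quoting Proposition~6.1 of~\cite{fill2019density_tails}, a Landau--Kolmogorov-type estimate asserting
\[
\limsup_{x\to\infty} r(x)^{-1}\bigl(\ln\|\Fbar^{(k+1)}\|_x-\ln\|\Fbar^{(k)}\|_x\bigr)\le 0
\quad\text{whenever }r(x)=\omega(\sqrt{x\log x}).
\]
Each induction step therefore costs at most $O(\sqrt{x\log x})$ in the exponent, and that is precisely where the $O(\sqrt{x\log x})$ in~\eqref{kRU:improvement} comes from.

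The obstacle in your route is the control of $\int e^{wz}|f^{(j)}(z)|\,dz$. Only for $j=0$ is this the mgf $\phi(w)$; for $j\ge 1$ you must bound it from pointwise tail estimates on $|f^{(j)}|$. With the \refT{T:oldmain} bound $|f^{(j)}(z)|\le\exp[-z\ln z+O(z)]$, the integrand $e^{wz}|f^{(j)}(z)|$ peaks near $z\asymp e^{w}$ and the integral is $\exp[\Theta(e^{w})]=\exp[\Theta(x\log x)]$---far too large. Even feeding in the sharper \refT{T:XAmain} bounds only brings this down to $\exp[\Theta(x)]$; balancing against $|u|^{-j}$ then forces $j\asymp\sqrt{x/\log x}\to\infty$, at which point uniformity of the implicit constants in~$j$ becomes the issue you have not addressed. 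So ``optimizing the cutoff'' does not deliver the stated $O(\sqrt{x\log x})$ without substantial further work, whereas the paper's induction via the Landau--Kolmogorov inequality gives it in one line.
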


\begin{remark}
\label{R:truth}
(a)~We aid the reader in gauging the approximate sizes of the various terms in the bounds appearing in \refT{T:newmain}.  It is routine to check that, 
as noted by Knessl and Szpankowski \cite[eq.~(20)]{knessl1999quicksort},
\begin{equation}
\label{wexpansion}
w = \ln(x / 2) + \ln \ln(x / 2) + (1 + o(1)) \frac{\ln \ln(x / 2)}{\ln(x / 2)} 
\end{equation}
as $x \to \infty$.  Thus, by~\eqref{Jexpansion}, we have the asymptotic equivalence
\begin{equation}
\label{Jequiv}
J(w) \sim 2 w^{-1} e^w = x.
\end{equation}
From~\eqref{wexpansion}--\eqref{Jequiv} it's easy to see that \refT{T:newmain} does indeed strengthen the upper bounds in \refT{T:XAmain}.  Inclusion of the term $J(w)$ in the bounds of \refT{T:newmain} enables us effectively to bypass the entire infinite asymptotic expansion~\eqref{Jexpansion}. 

(b)~Using non-rigorous methods, Knessl and Szpankowski~\cite[see esp.\ their eq.~(18)]{knessl1999quicksort} derive the following exact asymptotics for $\Fbar(x)$ as $x \to \infty$:
\begin{align}
\lefteqn{\hspace{-.05in}\Fbar(x) =} \nonumber \\
\label{Fbar_exact} 
&\exp\left[ - x w + J(w) - w^2 - (\alpha + \mbox{$\frac{1}{2}$}) w 
- \mbox{$\frac{3}{2}$} \ln w + C - \ln(2 \sqrt{\pi}) + o(1) \right]
\end{align}
for some (unspecified) constant~$C$, with $\alpha := 2 \ln 2 + 2 \gamma - 1$, where~$\gamma$ denotes the Euler--Mascheroni constant.
Hence the bound of \refT{T:newmain}(a) on 
$\ln \Fbar(x)$ matches the conjectured asymptotics to within an additive term $O(w) = O(\log x)$.

(c)~In their notation, the non-rigorously derived eq.~(88) of~\cite{knessl1999quicksort} should read
\[
P(y) \sim \frac{C_*}{\sqrt{2 \pi}} \frac{1}{\sqrt{y}\,w_*\sqrt{1 - (1 / w_*)}} 
\exp\left[ - y w_* + \int_1^{w_*}\,\frac{2 e^u}{u}\,du - w_*^2 - \alpha w_* \right],
\]
recalling $\alpha = 2 \gamma + 2 \ln 2 - 1$.  Ignoring the factor $\sqrt{1 - (1 / w^*)}$ which 
$\sim 1$, this result in our notation is
\begin{align}
f(x) 
&\sim (2 \pi \times 2 w^{-1} e^w)^{-1/2} e^{-x w} \psi(w) \nonumber \\
\label{f_expression}
&\sim (2 \pi x)^{-1/2} \exp[ - x w + J(w) - w^2 - \alpha w - \ln w + C],
\end{align}
where~$\psi$ is the moment generating function corresponding to~$f$ and~$C$ is the same constant as at~\eqref{Fbar_exact}.  [They derive their~(88) by the ``standard saddle 
point approximation'' from the moment generating function expansion~\eqref{psi_exact} recalled in 
\refR{R:non-rigorous} below, and they derive~\eqref{Fbar_exact} by integrating~\eqref{f_expression}.]  Hence 
the bound of \refT{T:newmain}(b) on $\ln f(x)$ matches the conjectured asymptotics~\eqref{f_expression} to within an additive term $O(\sqrt{x \log x})$. 
\end{remark}

We prove 
\refT{T:newmain}
in \refS{S:proof}.  In \refS{S:LD} we use our refined asymptotic bounds on~$F$ to derive right-tail large deviation results for the distribution of the number of comparisons required by {\tt QuickSort} that sharpen somewhat the two-sided large-deviation results of McDiarmid and Hayward~\cite{mcdiarmid1996large}.

We conclude this section by repeating from~\cite{fill2018XA} an open problem concerning \emph{left}-tail behavior.  

{\bf Open Problem.\ }With 
$\Fu(x) := F(-x)$ can the \emph{lower} bounds as $x \to \infty$ in the left-tail results
\begin{align}
\label{oldleft}
\exp\left[-e^{\Gamma x + \ln \ln x +O(1)}\right] 
&\leq f(-x) \leq \exp \left[-e^{\Gamma x + O\left(1\right)}\right],\\
\label{oldkleft}
\exp\left[-e^{\Gamma x + \ln \ln x +O(1)}\right] 
&\leq \| \Fu^{\left(k\right)} \|_x \leq \exp \left[-e^{\Gamma x + O(1)}\right]
\end{align}
of~\cite{janson2015tails} and~\cite{fill2019density_tails} be improved to match the asymptotics
\[
\Fu^{(k)}(x) = \exp \left[-e^{\Gamma x + O\left(1\right)}\right]
\]
suggested by Knessl and Szpankowski~\cite{knessl1999quicksort} (and known rigorously~\cite{janson2015tails, fill2019density_tails} for \emph{upper} bounds), where 
$\Gamma := (2-\frac{1}{\ln 2})^{-1}$?

\section{Proof of the main \refT{T:newmain}}
\label{S:proof}

In \refS{S:mgf} we bound the moment generating function (mgf) $\psi$ of~$Z$.  In \refS{S:bound} we prove \refT{T:newmain}(a) by combining the Chernoff bound
\[
\Fbar(x) = \mathbb{P}(Z \geq x) \leq e^{-t x} \psi(t),
\]
for judicious choice of $t \equiv t(x) > 0$,
with our bound on~$\psi$.  In \refS{S:(b)} we prove \refT{T:newmain}(b).

\subsection{A bound on the mgf of~$Z$}
\label{S:mgf}

Let~$\psi$ denote the mgf of~$Z$.  It was shown by 
R\"osler~\cite{rosler1991limit} that~$\psi$ is everywhere finite.
In this subsection we establish a bound on $\psi(t)$ which (for large~$t$) improves on that of \cite[Lemma~2.1]{fill2018XA}, which asserts
that for every $\epsilon > 0$ there exists 
$a \equiv a(\epsilon) \geq 0$ such that the mgf~$\psi$ of~$Z$ satisfies
\begin{equation}
\label{XApsibound}
\psi(t) \leq \exp[(2 + \epsilon) t^{-1} e^t + a t]
\end{equation}
for every $t > 0$.  The bound~\eqref{XApsibound} in turn improved the one obtained in the proof of \cite[Lemma~6.1]{janson2015tails}, namely, that there exists $a \geq 0$ such that
\begin{equation}
\label{Jansonpsibound}
\psi(t) \leq \exp(e^t + a t) \quad \mbox{for every $t \geq 0$}.
\end{equation}

Recalling the definition~\eqref{Jdef} of $J(t)$, we next state our bound on~$\psi(t)$ which, according to~\eqref{Jexpansion}, does indeed improve on~\eqref{XApsibound} for large~$t$.

\begin{proposition}
\label{P:newerpsibound}
There exists a constant 
$a \geq 0$ such that the moment generating function~$\psi$ of~$Z$ satisfies
\begin{equation}
\label{newerpsibound}
\psi(t) \leq \exp[J(t) - t^2 + a t]
\end{equation}
for every $t \geq 1$.  
\end{proposition}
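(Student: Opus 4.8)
\emph{Sketch of a proof plan.}
The plan is to work from the integral recursion for~$\psi$. Conditioning on~$U$ in the distributional identity~\eqref{distid} and using independence (together with the finiteness of~$\psi$ due to R\"osler), one gets
\[
\psi(t) = \int_0^1 e^{t g(u)}\,\psi(t u)\,\psi\bigl(t(1-u)\bigr)\,du, \qquad t \in \bbR.
\]
Write $B(t) := J(t) - t^2 + a t$. I would first record some soft facts: $\psi$ is continuous, everywhere finite, and log-convex (being an mgf), with $\psi'(0)=0$; hence $\ln\psi$ is convex with $\ln\psi(0)=0$, so $\psi(s)\le e^{s\ln\psi(1)}$ for $s\in[0,1]$; also $J$ and $B$ are increasing on $[1,\infty)$ (since $\tfrac{2e^s}{s}>2s$ there for $a\ge0$). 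Then I would prove $\psi(t)\le e^{B(t)}$ for all $t\ge1$ by a bootstrap: there are absolute constants $T_1$ and $a_0$ such that whenever $a\ge a_0$ and $\psi\le e^{B}$ throughout $[1,t)$ with $t\ge T_1$, one obtains the \emph{strict} inequality $\psi(t)<e^{B(t)}$; and, separately, choosing $a\ge a_0$ large enough also forces $\psi\le e^{B}$ on the compact interval $[1,T_1]$ (using $J\ge0$ and $t^2\le T_1^2$ there). The set $\{t:\psi\le e^{B}\text{ on }[1,t]\}$ is then closed, contains~$T_1$, and is open to the right by the strict bootstrap inequality plus continuity, hence equals $[1,\infty)$.

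The analytic core is the bootstrap step. Inserting the inductive bound into the recursion, I would split the $u$-integral into a bulk region $u\in[\delta,1-\delta]$ and two symmetric boundary regions. In the bulk, $B(tu)+B(t(1-u))\le 2B(t(1-\delta))$, which is smaller than $B(t)$ by an exponentially large amount, so after $e^{tg(u)}\le e^t$ this piece is a superexponentially small fraction of $e^{B(t)}$. For $u$ near~$0$, substitute $v:=tu$; the part with $v\ge1$ is again superexponentially negligible, since $J(t-v)-J(t)=-\int_{t-v}^t\tfrac{2e^s}{s}\,ds\le -\tfrac{2e^t}{t}(1-e^{-v})$ is already enormously negative once $v\ge1$. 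On $v\in(0,1)$ I would use $\psi(v)\le e^{v\ln\psi(1)}$ and the inductive bound on $\psi(t-v)$, the exact identity $tg(v/t)=t+2v\ln(v/t)+2(t-v)\ln(1-v/t)$, the algebraic cancellation $B(t-v)-B(t)=[J(t-v)-J(t)]+2tv-v^2-av$ (the $2tv$ is precisely the price of the $-t^2$ term), and $\ln(1-x)\le -x-\tfrac{x^2}{2}$, to bound the relevant exponent by $t+2v\ln(ve^{t-1}/t)+(\ln\psi(1)-a)v-\tfrac{v^2}{2}-\tfrac{2e^t}{t}(1-e^{-v})$. Changing variables $p=1-e^{-v}$ and expanding about $p=0$ (where the mass sits, on the scale $p\asymp t e^{-t}$), the boundary integral near $u=0$ comes out at most $\tfrac{e^{B(t)}}{t}\,e^t\bigl[\lambda^{-1}+\lambda^{-2}(\ln\psi(1)-a+O(1))\bigr]$ with $\lambda:=\tfrac{2e^t}{t}$, which for $a$ above an absolute constant is strictly below $\tfrac12 e^{B(t)}$; by symmetry the same holds near $u=1$, so $\psi(t)<e^{B(t)}$.

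The step needing care — and the reason the $-t^2$ can be afforded — is the evaluation of that boundary integral, because the recursion is \emph{marginal}: the two boundary contributions each behave like $\tfrac12\psi(t)$, so their sum is essentially $\psi(t)$ and a crude induction barely fails. The saving term is $2v\ln(v/t)$ in $tg(v/t)$, arising from the $2u\ln u$ (and, symmetrically, $2(1-u)\ln(1-u)$) part of~$g$: against the effective weight $e^{-\lambda v}$ it contributes $\int_0^\infty 2p\ln p\,e^{-\lambda p}\,dp=\tfrac{2(1-\gamma-\ln\lambda)}{\lambda^2}$, and since $\ln\lambda\approx t$ this is an $O(t)/\lambda^2$ negative correction that exactly cancels the $O(t)/\lambda^2$ positive contribution of the $2tv$ produced by the $-t^2$ term, leaving a negative $O(\log t)/\lambda^2$ remainder absorbed by taking $a$ large. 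Controlling every error term — the $v\ge1$ tails, the bulk, and the higher-order pieces of the $p$-expansion and of the Jacobian $\tfrac{dv}{dp}=\tfrac{1}{1-p}$ — against the doubly-exponential decay is routine but lengthy; the compact-interval base case is immediate.
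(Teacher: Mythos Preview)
Your strategy is sound and parallels the paper's: both push the integral identity $\psi(t)=2\int_0^{1/2}\psi(ut)\psi((1-u)t)e^{tg(u)}\,du$ through a comparison function, obtain a strict self-improvement for all sufficiently large~$t$, and close with a minimum-counterexample/continuity argument after covering an initial compact interval by choice of the constant.

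The chief implementation difference is the comparison function. The paper does not work with $e^{B(t)}$ directly but with
\[
\hpsi(t):=(1-e^{-t/2})\exp[J(t)-t^2-\alpha t-\ln t]\ (t>1),\qquad \hpsi(t):=1\ (t\le1),
\]
and proves as a separate lemma that $2\int_0^{1/2}\hpsi(ut)\hpsi((1-u)t)e^{tg(u)}\,du<\hpsi(t)$ for all large~$t$, with no free parameter involved; the slack producing the strict inequality comes from the prefactor $(1-e^{-t/2})$, which after the boundary expansion yields a deficit $\sim\tfrac14 te^{-3t/2}$. The constant $a'$ then enters only through the multiplier $e^{a't}$, and since $a'ut+a'(1-u)t=a't$ this multiplier passes through the integral equation exactly, so $a'$ serves solely to dominate~$\psi$ on the fixed compact interval. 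Your route instead bakes the slack into the linear term $at$ itself; this also works, because the $at$ part of $B$ still factors linearly through the recursion (so the bulk and the $v\ge1$ boundary pieces are $a$-independent), and in the $v\in(0,1)$ piece the residual $-av$ can be made to dominate the $O(1)$ corrections. What the paper's device buys is a clean decoupling: the threshold $t_2$ is manifestly absolute, and one never has to argue that the error terms depending on~$a$ are harmless.

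One computational slip worth flagging: the cancellation you describe between $2v\ln(v/t)$ and the $2tv$ generated by $-t^2$ does not leave an $O(\log t)/\lambda^2$ remainder. With $\lambda=2e^t/t$ one has
\[
\int_0^\infty 2v\ln(v/t)\,e^{-\lambda v}\,dv=\tfrac{2}{\lambda^2}\bigl(1-\gamma-\ln(\lambda t)\bigr)=\tfrac{2}{\lambda^2}(1-\gamma-t-\ln 2),
\qquad
\int_0^\infty 2tv\,e^{-\lambda v}\,dv=\tfrac{2t}{\lambda^2},
\]
so the $t$'s cancel exactly and the remainder is the absolute constant $\tfrac{2(1-\gamma-\ln 2)}{\lambda^2}<0$. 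In your displayed integral you wrote $\int 2p\ln p$ rather than $\int 2v\ln(v/t)$, dropping the $-2v\ln t$ piece; with that piece omitted the remainder is \emph{positive} $\sim\tfrac{2\ln t}{\lambda^2}$, which could not be absorbed by a fixed~$a$ and would break the bootstrap. With the correct $O(1)/\lambda^2$ remainder your inequality does close for some absolute $a_0$, and the compact-interval base case then goes through as you say.
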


We postpone the proof of Proposition~\ref{P:newerpsibound} for a preliminary remark.

\begin{remark}
\label{R:non-rigorous}

Using non-rigorous methods, Knessl and Szpankowski~\cite{knessl1999quicksort} derive that as $t \to \infty$ the mgf~$\psi$ satisfies
\begin{equation}
\label{psi_exact}
\psi(t) = \exp[J(t) - t^2 - \alpha t - \ln t + C + o(1)], 
\end{equation}
as $t \to \infty$ for the same (unspecified) constant~$C$ as at~\eqref{Fbar_exact}, with $\alpha = 2 \ln 2 + 2 \gamma - 1$; see their equation~(71) (we have corrected a misplaced-right-parenthesis typo).  If~\eqref{psi_exact} is true, then our bound on $\ln \psi(t)$ agrees with the truth to within $O(t)$, whereas the bound~\eqref{XApsibound} (for fixed~$\epsilon$) exceeds the true value by 
$(1 + o(1)) \epsilon t^{-1} e^t$.  Thus our bound~\eqref{newerpsibound} comes substantially closer to the apparent truth than does~\eqref{XApsibound}.  
\end{remark}

The proof of Proposition~\ref{P:newerpsibound} will require the following lemma.  Recall from 
\refR{R:non-rigorous} that
$\alpha = 2 \ln 2 + 2 \gamma - 1$, and define
\[
\hpsi(t) := 
\begin{cases}
(1 - e^{-t/2}) \exp[J(t) - t^2 - \alpha t - \ln t] & \mbox{if $t > 1$} \\
1 & \mbox{otherwise}.
\end{cases}
\]

\begin{lemma}
\label{L:contranewer}
For all sufficiently large~$t$ we have the strict inequality
\[
2 \int_{u = 0}^{1 /2 }\!\hpsi(u t) \hpsi((1 - u) t) \exp[t g(u)]\,du < \hpsi(t).
\] 
\end{lemma}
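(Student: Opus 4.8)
The plan is to localize the integral near $u = 0$ (by symmetry the behaviour near $u = 1$ is identical), to exploit that $\hpsi(ut) = 1$ as soon as $ut \le 1$, and to extract from the factor $1 - e^{-t/2}$ built into $\hpsi$ a small but strictly negative correction that outweighs every error term. Write $\psi_0(s) := \exp[J(s) - s^2 - \alpha s - \ln s]$ for $s > 0$, so that $\hpsi(s) = (1 - e^{-s/2})\psi_0(s)$ for $s > 1$ and $\psi_0(s) \le 2\hpsi(s)$ for $s \ge 2$. Fix $t$ large and split $\int_{u=0}^{1/2} = \int_0^{1/t} + \int_{1/t}^{1/2}$.

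On the outer range $u \in [1/t, 1/2]$ both arguments exceed $1$, so $\hpsi(ut)\hpsi((1-u)t) \le \psi_0(ut)\psi_0((1-u)t)$, and writing out the exponent,
\[
\log\frac{\psi_0(ut)\psi_0((1-u)t)e^{tg(u)}}{\psi_0(t)}
= \Bigl[ J(ut) - \int_{(1-u)t}^{t}\frac{2e^s}{s}\,ds \Bigr] + 2u(1-u)t^2 - \ln[u(1-u)t] + tg(u).
\]
Monotonicity of $J$ makes the bracket at most $J(t/2) - \int_{t-1}^{t} 2 e^s s^{-1}\,ds \le -\tfrac12 t^{-1} e^{t-1}$ for large $t$, and this dominates the remaining $O(t^2)$; hence the outer range contributes at most $\psi_0(t)\exp(-\tfrac12 t^{-1}e^{t-1})$, which is superexponentially small compared with $\hpsi(t)$ and can be absorbed at the end.

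The crux is the inner range, where $\hpsi(ut) = 1$ and I must control $2\int_0^{1/t}(1 - e^{-(1-u)t/2})\,\psi_0((1-u)t)\,e^{tg(u)}\,du$. Substitute $r := 2 u e^t$. Expanding $J((1-u)t) - J(t) = -\int_{(1-u)t}^t 2 e^s s^{-1}\,ds$ (via $s = t(1-x)$) together with $-(1-u)^2 t^2 + t^2 = 2 u t^2 - u^2 t^2$, $-\alpha(1-u)t + \alpha t = \alpha u t$, $-\ln((1-u)t) + \ln t = -\ln(1-u)$, and $t[g(u) - 1] = 2 u t \ln u + 2 t (1-u)\ln(1-u)$, the $O(t^2 e^{-t})$ terms cancel (the $2 u t^2$ above against the $-2 u t^2$ hidden in $2 u t \ln u = 2 u t \ln\tfrac r2 - 2 u t^2$), leaving
\[
\log\frac{\psi_0((1-u)t)\,e^{tg(u)}}{\psi_0(t)\,e^{t}} = -r + \eps(t,r), \qquad
\eps(t,r) = \frac{r t}{e^t}\,h(r) + \frac{1}{e^t}\Bigl(\frac r2 - \frac{r^2}{4}\Bigr) + O\bigl(t^2 e^{-2t}\bigr),
\]
uniformly for $r$ in bounded sets, where $h(r) := \tfrac r4 + \tfrac\alpha2 - 1 + \ln\tfrac r2$; a cruder estimate on $1 \le r \le 2 e^t/t$ supplies an integrable majorant for $e^{-r} e^{\eps(t,r)}$. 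The role of the constants in $\hpsi$ is now the pair of exact identities (using $\int_0^\infty r e^{-r}\ln r\,dr = \Gamma'(2) = 1 - \gamma$)
\[
\int_0^\infty r e^{-r} h(r)\,dr = \tfrac12 + \tfrac\alpha2 - \gamma - \ln 2 = 0, \qquad
\int_0^\infty e^{-r}\Bigl(\tfrac r2 - \tfrac{r^2}{4}\Bigr)\,dr = \tfrac12 - \tfrac12 = 0,
\]
the first being exactly what forces $\alpha = 2\ln 2 + 2\gamma - 1$ and the second reflecting the $-\ln s$ in $\hpsi$; together with the majorant they give $\int_0^{2e^t/t} e^{-r} e^{\eps(t,r)}\,dr = 1 + O(t^2 e^{-2t})$.

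Finally I would split $1 - e^{-(1-u)t/2} = (1 - e^{-t/2}) - e^{-t/2}(e^{u t/2} - 1)$. Using $\psi_0((1-u)t)e^{tg(u)} = \psi_0(t)\,e^t\,e^{-r}e^{\eps(t,r)}$ and $du = dr/(2e^t)$, the first piece contributes $\hpsi(t)\,[1 + O(t^2 e^{-2t})]$, while the second is strictly negative and of exact order $t e^{-3t/2}\hpsi(t)$ (because $e^{ut/2} - 1 \sim r t/(4 e^t)$ and $\int_0^\infty r e^{-r}\,dr = 1$). Since $t e^{-3t/2}$ dominates $t^2 e^{-2t}$, the inner range is at most $\hpsi(t)(1 - c\,t e^{-3t/2})$ for some $c > 0$ and all large $t$, and adding the superexponentially small outer part keeps the total strictly below $\hpsi(t)$. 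The hard part will be precisely this comparison of orders: one must carry the expansion of $\eps(t,r)$ far enough, with genuine uniformity and a genuine integrable majorant over the whole range $0 \le r \le 2 e^t/t$, to be sure that after the two moment cancellations the surviving error is only $O(t^2 e^{-2t})$ — small enough to be beaten by the $O(t e^{-3t/2})$ bonus that the tie-breaking factor $1 - e^{-t/2}$ hands us.
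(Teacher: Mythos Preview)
Your proposal is correct and follows essentially the same route as the paper's proof: the same change of variable $r=2ue^{t}$ (the paper's $\eta$), the same Taylor expansions of $J$, the quadratic, linear and logarithmic pieces of $\psi_0$, the same extraction of a strictly negative correction of exact order $t e^{-3t/2}$ from the tie-breaking factor $1-e^{-t/2}$, and the same conclusion $\lambda(t)=\hpsi(t)\bigl[1-\tfrac14 t e^{-3t/2}+O(\mathrm{poly}(t)\,e^{-2t})\bigr]$.

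The only noteworthy differences are cosmetic. You split at $u=1/t$, the exact threshold where $\hpsi(ut)$ switches formula, so your outer range has both arguments $>1$ and needs no case analysis; the paper instead splits at $\eta=e^{t/10}$ (so $u=\tfrac12 e^{-9t/10}$), which keeps the major part extremely localized and makes the Taylor remainders trivially uniform, at the cost of a two-case treatment of the minor part. Your split is conceptually cleaner but forces you to supply the integrable majorant over the much longer range $r\in[0,2e^{t}/t]$ and to check uniformity of the expansion there; this works (one gets $-r+\eps(t,r)\le -r/(2e)+O(1)$ from a crude bound on $J((1-u)t)-J(t)$), but the resulting error is $O(\mathrm{poly}(t)\,e^{-2t})$ rather than the $O(t^{2}e^{-2t})$ you state --- harmless, since any polynomial factor is swallowed by $t e^{-3t/2}$. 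A nice feature of your write-up is that you isolate the two moment identities $\int_0^\infty r e^{-r}h(r)\,dr=0$ and $\int_0^\infty e^{-r}(r/2-r^2/4)\,dr=0$ and explain that they are precisely what pins down the constants $\alpha$ and the $-\ln s$ in the definition of $\hpsi$; the paper's calculation arrives at the same cancellations but does not single them out.
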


\begin{proof}
Call the left side of this inequality $\lambda(t)$.  To handle $\lambda(t)$, we begin by changing the variable of integration from~$u$ to~$\eta$, where $u = \frac{1}{2} e^{-t} \eta$:
\begin{align*}
\lambda(t) 
&= e^{-t} \int_{\eta = 0}^{e^t}\!\hpsi\left(\frac{1}{2} t e^{-t} \eta\right) 
\hpsi\left(t - \frac{1}{2} t e^{-t} \eta\right) \exp\left[t g\left(\frac{1}{2}e^{-t} \eta\right)\right]\,d\eta \\
&= \int_{\eta = 0}^{e^t}\!\hpsi\left(\frac{1}{2} t e^{-t} \eta\right) 
\hpsi\left(t - \frac{1}{2} t e^{-t} \eta\right) \exp\left[2 t \phi\left(\frac{1}{2}e^{-t} \eta\right)\right]\,d\eta
\end{align*}
with $\phi(u) := u \ln u + (1 - u) \ln(1 - u) \leq 0$.

We next show that the contribution to $\int_{\eta = 0}^{e^t}$ here from $\int_{\eta = e^{t / 10}}^{e^t}$ is effectively quite negligible.  To see this, we consider the integrand in two cases.  Before breaking into cases, observe that the second argument for~$\hpsi$ is at least $t / 2$, which exceeds~$1$ if (as we may suppose) $t > 2$.  For the first case, suppose that the first argument for~$\hpsi$ also exceeds~$1$.  In this case we need to treat the sum of the $J$-values at these arguments.  But, using the increasingness of $2 s^{-1} e^s$ for $s \geq 1$, we see that if 
$a, b \geq 1$ and $a + b = t$, then
\begin{align*}
J(a) + J(b) 
&= \int_{s = 1}^a\!2 s^{-1} e^s\,ds + \int_{s = 1}^b\!2 s^{-1} e^s\,ds \\    
&\leq \int_{s = 1}^a\!2 s^{-1} e^s\,ds + \int_{s = a}^{a + b - 1}\!2 s^{-1} e^s\,ds = J(t - 1)
\end{align*}
and therefore
\begin{align*}
J(a) + J(b) - J(t) 
&\leq -[J(t) - J(t - 1)] = - \int_{s = t - 1}^t\!2 s^{-1} e^s\,ds \\ 
&\leq - 2 (t - 1)^{-1} e^{t - 1} = - (1 + o(1))\,2 e^{-1}\,t^{-1} e^t.
\end{align*}
For the second case, suppose that the first argument for~$\hpsi$ does not exceed~$1$.  In this case we need to treat $J(t - \frac{1}{2} t e^{-t} \eta) \leq J(t - \frac{1}{2} t e^{- 9 t / 10})$.  In this case, observe that
\begin{align*}
J(t - \mbox{$\frac{1}{2}$} t e^{- 9 t / 10}) - J(t) 
&\leq (\mbox{$\frac{1}{2}$} t e^{- 9 t / 10}) \cdot
- 2 (t - \mbox{$\frac{1}{2}$} t e^{- 9 t / 10})^{-1} \exp[t - \mbox{$\frac{1}{2}$} t e^{- 9 t / 10}] \\ 
&= - (1 + o(1)) e^{t / 10}.
\end{align*}
The minor contribution $\int_{\eta = e^{t/10}}^{e^t}$ is thus bounded between~$0$ and
\begin{align*}
\lefteqn{\hspace{-.4in}
(e^t - e^{t / 10}) \times \exp[J(t) - (1 + o(1)) e^{t  / 10} + O(t^2)] 
\times 1} \\
&= \exp[J(t) - (1 + o(1)) e^{t / 10} + O(t^2)] \\
&= \exp[- (1 + o(1)) e^{t / 10}] \hpsi(t).
\end{align*}

For the major contribution $\int_{\eta = 0}^{e^{t / 10}}$, we can use simple expansions for the first and third factors in the integrand, because 
$0 \leq \frac{1}{2} t e^{-t} \eta \leq \frac{1}{2} t e^{- 9 t / 10} = o(1)$:
\begin{align*}
\hpsi\left(\mbox{$\frac{1}{2}$} t e^{-t} \eta\right) 
&= 1, \\
\phi\left(\mbox{$\frac{1}{2}$} e^{-t} \eta\right) 
&= \mbox{$\frac{1}{2}$} e^{-t} \eta (-t + \ln \eta - \ln 2) - \mbox{$\frac{1}{2}$} e^{-t} \eta 
+ O(e^{-2 t} \eta^2).
\end{align*}
We also use an expansion for $J(t - \mbox{$\frac{1}{2}$} t e^{-t} \eta)$ appearing in the second factor in the integrand:
\begin{align*}
J(t - \mbox{$\frac{1}{2}$} t e^{-t} \eta) - J(t) 
&= - \mbox{$\frac{1}{2}$} t e^{-t} \eta J'(t) + \mbox{$\frac{1}{8}$} t^2 e^{-2 t} \eta^2 J''(t) 
+ O(t^2 e^{-2 t} \eta^3) \\
&= - \eta + \mbox{$\frac{1}{4}$} (t - 1) e^{-t} \eta^2 + O(t^2 e^{-2 t} \eta^3).
\end{align*}
Thus, abbreviating $t - \frac{1}{2} t e^{-t} \eta$ as $t_1 \equiv t_1(t, \eta)$, the major contribution to 
$\lambda(t)$ equals
\[
\exp[J(t)] I(t),
\]
where $I(t)$ is the integral
\begin{align*}
\lefteqn{I(t) := \int_{\eta = 0}^{e^{t / 10}}\!e^{- \eta} (1 - e^{- t_1 / 2})
\exp\big[\mbox{$\frac{1}{4}$} (t - 1) e^{-t} \eta^2 + O(t^2 e^{-2 t} \eta^3)} \\ 
&{} \qquad + t e^{-t} \eta (-t + \ln \eta - \ln 2) - t  e^{-t} \eta 
+ O(t e^{-2 t} \eta^2) - t_1^2 - \alpha t_1 - \ln t_1\big]\,d\eta.
\end{align*}
We now use the following additional expansions:
\begin{align*}
t_1^2 &= t^2 - t^2 e^{-t} \eta + O(t^2 e^{-2 t} \eta^2), \\
\ln t_1 &= \ln(t - \mbox{$\frac{1}{2}$} t e^{-t} \eta) = \ln t - \mbox{$\frac{1}{2}$} e^{-t} \eta 
+ O(e^{-2 t} \eta^2), \\
e^{- t_1 / 2} &= e^{- t / 2} [1 + \mbox{$\frac{1}{4}$} t e^{-t} \eta + O(t^2 e^{-2 t} \eta^2)].
\end{align*}
Further we can expand the factor $\exp[\cdot]$ appearing in $I(t)$ as $1 + \cdot + O(\cdot^2)$, because $\cdot = o(1)$ uniformly throughout the range of integration. 

Calculus now gives
\begin{align*}
I(t) 
&= (1 - e^{-t/2}) \exp[- t^2 - \alpha t - \ln t] \\ 
&{} \times \big[ O(t^4 e^{-2 t}) 
+ \int_{\eta = 0}^{\infty} e^{- \eta} (1 - \mbox{$\frac{1}{4}$} t e^{-3 t / 2} \eta 
+ \mbox{$\frac{1}{4}$} (t - 1) e^{-t} \eta^2 \\ 
&{} + t e^{-t} \eta (-t + \ln \eta - \ln 2)
- t e^{-t} \eta + t^2 e^{-t} \eta + \mbox{$\frac{1}{2}$} \alpha t e^{-t} \eta 
+ \mbox{$\frac{1}{2}$} e^{-t} \eta)\,d\eta \big] \\
&= (1 - e^{-t/2}) \exp[- t^2 - \alpha t - \ln t] \\ 
&{} \times [1 - \mbox{$\frac{1}{4}$} t e^{-3 t / 2} + O(t^4 e^{-2 t})].      
\end{align*}

We conclude for sufficiently large~$t$ that
\[
\lambda(t) = \hpsi(t) [1 - \mbox{$\frac{1}{4}$} t e^{-3 t / 2} + O(t^4 e^{-2 t})] < \hpsi(t).~\qed
\]
\noqed
\end{proof}

\begin{remark}
\label{R:reverse_ineq}
If we change the factor $(1 - e^{-t/2})$ in the definition of $\hpsi$ to $(1 + e^{-t/2})$, then a similar proof shows that the reverse strict inequality holds in \refL{L:contranewer}.  In fact, the proof becomes a bit simpler, since the minor contribution can simply be bounded below by~$0$. 
\end{remark}

\begin{proof}[Proof of Proposition~\ref{P:newerpsibound}]
We carry out the proof by showing that there exists $a' \geq 0$ such that 
\begin{equation}
\label{newer_psi_ineq}
\psi(t) \leq e^{a' t} \hpsi(t)
\end{equation}
for every $t > 0$.

To begin, we compare asymptotics of $\psi(t)$ and $\hpsi(t)$ as $t \to 0$.  Because~$Z$ has zero mean and finite variance, we have $\psi(t) = 1 + O(t^2)$.  On the other hand, 
$\hpsi(t) = 1$ for all $0 < t \leq 1$.  We can thus choose $t_1 > 0$ and $a'' > 0$ such that~\eqref{newer_psi_ineq} holds for $t \in [0, t_1]$ and any $a' \geq a''$.

Let $t_2 > 1$ be such that the strict inequality in \refL{L:contranewer} holds for all $t \geq t_2$, and choose~$a' \geq a''$ so that~\eqref{newer_psi_ineq} holds for $t \in [t_1, t_2]$.  Assuming for the sake of contradiction that~\eqref{newer_psi_ineq} fails for some $t > 0$, let 
$T := \inf\{t > 0:\mbox{\eqref{newer_psi_ineq} fails}\}$.  Then $T \geq t_2$, and continuity gives
\[
\psi(T) = e^{a' T} \hpsi(T).
\]
Further, if $0 < u < 1$, then~\eqref{newer_psi_ineq} holds for $t = u T$ and $t = (1 - u) T$, and thus, using our standard integral equation for~$\psi$, we have
\[
\psi(T) \leq e^{a' T} \times 2 \int_{u = 0}^{1 / 2}\!\hpsi(u T) \hpsi((1 - u) T) \exp[t g(u)]\,du,   
\]
which is strictly smaller than $e^{a' T} \hpsi(T)$ by applying \refL{L:contranewer} with $t = T \geq t_2$.  The resulting strict inequality $\psi(T) < e^{a' T} \hpsi(T)$ contradicts the definition of~$T$.  Hence~\eqref{newer_psi_ineq} holds for all $t \geq 0$.
\end{proof}

\begin{remark}
\label{R:reverse}
Using \refR{R:reverse_ineq} just as \refL{L:contranewer} is used in the proof of Proposition~\ref{P:newerpsibound}, we have the following reverse of Proposition~\ref{P:newerpsibound}:
There exists a constant 
$a \geq 0$ such that the mgf~$\psi$ of~$Z$ satisfies
\begin{equation}
\label{reversepsibound}
\psi(t) \geq \exp[J(t) - t^2 - a t]
\end{equation}
for every $t \geq 1$.
\end{remark}

\begin{remark}
\label{R:further}
(a)~Unfortunately, due to the need to handle small values of~$t$ in the proofs of Proposition~\ref{P:newerpsibound} and \refR{R:reverse}, we sacrifice the information in the linear term of $\ln \psi(t)$ that \refR{R:non-rigorous} and \refL{L:contranewer} strongly suggest.  Thus any further progress on asymptotic determination of $\psi$ would have to employ a technique different from the one used to derive~\eqref{Jansonpsibound}, \eqref{XApsibound}, and~\eqref{newerpsibound}.   

(b)~The extent to which we are able to make rigorous the claim~\eqref{psi_exact} and thereby, in particular, identify the linear term in $\ln \psi(t)$ is the following.  If
\[
\psi(t) = \exp[J(t) + K(t)]
\]
where we \emph{assume} $K'(t) = O(t^{b_1})$ and $K''(t) = O(t^{b_2})$ for some~$b_1$ and $b_2$ [just as we now know rigorously that $K(t) \sim - t^2 = O(t^2)$], then we must have
\[
K(t) = - t^2 - \alpha t - \ln t + C + O(t^b e^{-t})
\]
for some constant~$C$, where $b := \max\{4, 2 + 2 b_1, 2 + b_2\}$.  (Aside:\ It is natural to \emph{assume further} that $b_1 = 1$ and $b_2 = 0$, in which case $b = 4$.)
The proof of this assertion is quite similar to the proof of \refL{L:contranewer} and is omitted. 
\ignore{ 
{\bf A complete proof of the assertion is given in the next part of this remark but probably should NOT be included in the paper!  Note also that it follows~\cite{knessl1999quicksort} closely.}
(c)~The key is once again to use the integral equation
\begin{equation}
\label{inteq}
\psi(t) = 2 \int_{u = 0}^{1/2}\!\psi(u t) \psi((1 - u) t) e^{t g(u)}\,du, \quad t \in \mathbb{R},
\end{equation}
[which follows from~\eqref{distid} and symmetry].
Call the integral side of the integral equation~\eqref{inteq} $\rho(t)$.  To handle $\rho(t)$, we begin by changing the variable of integration from~$u$ to~$\eta$, where $u = \frac{1}{2} e^{-t} \eta$:
\begin{align*}
\rho(t) 
&= e^{-t} \int_{\eta = 0}^{e^t}\!\psi\left(\frac{1}{2} t e^{-t} \eta\right) 
\psi\left(t - \frac{1}{2} t e^{-t} \eta\right) \exp\left[t g\left(\frac{1}{2}e^{-t} \eta\right)\right]\,d\eta \\
&= \int_{\eta = 0}^{e^t}\!\psi\left(\frac{1}{2} t e^{-t} \eta\right) 
\psi\left(t - \frac{1}{2} t e^{-t} \eta\right) \exp\left[2 t \phi\left(\frac{1}{2}e^{-t} \eta\right)\right]\,d\eta
\end{align*}
with $\phi(u) := u \ln u + (1 - u) \ln(1 - u)$.

We next show that the contribution to $\int_{\eta = 0}^{e^t}$ here from $\int_{\eta = e^{t / 10}}^{e^t}$ is effectively quite negligible.  To see this, we consider the integrand in two cases.  Before breaking into cases, observe that the second argument for~$\psi$ is at least $t / 2$, which exceeds~$1$ if (as we may suppose) $t > 2$.  For the first case, suppose that the first argument for~$\psi$ also exceeds~$1$.  In this case we need to treat the sum of the $J$-values at these arguments.  But, using the increasingness of $2 s^{-1} e^s$ for $s \geq 1$, we see that if 
$a, b \geq 1$ and $a + b = t$, then
\begin{align*}
J(a) + J(b) 
&\leq \int_{s = 1}^a\!2 s^{-1} e^s\,ds + \int_{s = 1}^b\!2 s^{-1} e^s\,ds \\    
&\leq \int_{s = 1}^a\!2 s^{-1} e^s\,ds + \int_{s = a}^{a + b - 1}\!2 s^{-1} e^s\,ds = J(t - 1)
\end{align*}
and therefore
\begin{align*}
J(a) + J(b) - J(t) 
&\leq -[J(t) - J(t - 1)] = - \int_{s = t - 1}^t\!2 s^{-1} e^s\,ds \\ 
&\leq - 2 (t - 1)^{-1} e^{t - 1} = - (1 + o(1))\,2 e^{-1}\,t^{-1} e^t.
\end{align*}
For the second case, suppose that the first argument for~$\psi$ does not exceed~$1$ and so the first~$\psi$-factor is bounded by a constant.  In this case we need to treat $J(t - \frac{1}{2} t e^{-t} \eta) \leq J(t - \frac{1}{2} t e^{- 9 t / 10})$.  In this case, observe that
\begin{align*}
J(t - \mbox{$\frac{1}{2}$} t e^{- 9 t / 10}) - J(t) 
&\leq - 2 (t - \mbox{$\frac{1}{2}$} t e^{- 9 t / 10})^{-1} \exp[t - \mbox{$\frac{1}{2}$} t e^{- 9 t / 10}] \\ 
&= - (1 + o(1)) 2 t^{-1} e^t \leq - (1 + o(1))\,2 e^{-1}\,t^{-1} e^t.
\end{align*}
The minor contribution $\int_{\eta = e^{t/10}}^{e^t}$ is thus bounded between~$0$ and
\begin{align*}
\lefteqn{\hspace{-.4in}
(e^t - e^{t / 10}) \times \exp[J(t) - (1 + o(1)) 2 e^{-1} t^{-1} e^t + O(t^2)] 
\times 1} \\
&= \exp[J(t) - (1 + o(1)) 2 e^{-1} t^{-1} e^t + O(t^2)] \\
&= \exp[- (1 + o(1)) 2 e^{-1} t^{-1} e^t] \psi(t).
\end{align*}

For the major contribution $\int_{\eta = 0}^{e^{t / 10}}$, we can use simple expansions for the first and third factors in the integrand, because 
$0 \leq \frac{1}{2} t e^{-t} \eta \leq \frac{1}{2} t e^{- 9 t / 10} = o(1)$:
\begin{align*}
\psi\left(\mbox{$\frac{1}{2}$} t e^{-t} \eta\right) 
&= 1 + O(t^2 e^{-2 t} \eta^2), \\
\phi\left(\mbox{$\frac{1}{2}$} e^{-t} \eta\right) 
&= \mbox{$\frac{1}{2}$} e^{-t} \eta (-t + \ln \eta - \ln 2) - \mbox{$\frac{1}{2}$} e^{-t} \eta 
+ O(e^{-2 t} \eta^2).
\end{align*}
We also use an expansion for $J(t - \mbox{$\frac{1}{2}$} t e^{-t} \eta)$ appearing in the second factor in the integrand:
\begin{align*}
J(t - \mbox{$\frac{1}{2}$} t e^{-t} \eta) - J(t) 
&= - \mbox{$\frac{1}{2}$} t e^{-t} \eta J'(t) + \mbox{$\frac{1}{8}$} t^2 e^{-2 t} \eta^2 J''(t) 
+ O(t^2 e^{-2 t} \eta^3) \\
&= - \eta + \mbox{$\frac{1}{4}$} (t - 1) e^{-t} \eta^2 + O(t^2 e^{-2 t} \eta^3).
\end{align*}
We use an expansion for $K(t - \mbox{$\frac{1}{2}$} t e^{-t} \eta)$, as well:
\begin{align*}
K(t - \mbox{$\frac{1}{2}$} t e^{-t} \eta) - K(t) 
&= - \mbox{$\frac{1}{2}$} t e^{-t} \eta K'(t) + O(t^2 e^{-2 t} \eta^2 K''(t)) \\
&= - \mbox{$\frac{1}{2}$} t e^{-t} \eta K'(t) + O(t^{2 + b_2} e^{-2 t} \eta^2).
\end{align*}

Thus, the major contribution to $\rho(t)$ equals
\[
\exp[J(t) + K(t)] I(t) = \psi(t) I(t),
\]
where $I(t)$ is the integral
\begin{align*}
\lefteqn{\hspace{-.3in}I(t) := \int_{\eta = 0}^{e^{t / 10}}\!e^{- \eta}
\exp\Big[\mbox{$\frac{1}{4}$} (t - 1) e^{-t} \eta^2 + O(t^2 e^{-2 t} \eta^3) 
+ t e^{-t} \eta (-t + \ln \eta - \ln 2)}\\ 
&{} \qquad - t  e^{-t} \eta + O(t e^{-2 t} \eta^2) - \mbox{$\frac{1}{2}$} t e^{-t} \eta K'(t) 
+ O(t^{2 + b_2} e^{-2 t} \eta^2)\Big]\,d\eta.
\end{align*}
Further we can expand the factor $\exp[\cdot]$ appearing in $I(t)$ as $1 + \cdot + O(\cdot^2)$, because $\cdot = o(1)$ uniformly throughout the range of integration. 

Calculus now gives
\[
I(t) 
= 1 - e^{-t} [t^2 + \mbox{$\frac{1}{2}$} \alpha t + 1 + \mbox{$\frac{1}{2}$} t K'(t)] 
+ O(t^b e^{-2 t}).    
\]  
Equating $\psi(t) (I(t) + \exp[- (1 + o(1)) 2 e^{-1} t^{-1} e^t])$ with $\psi(t)$, we now find  
\[
K'(t) = - (2 t + \alpha + t^{-1}) + O(t^b e^{-t})
\]
and hence
\[
K(t) = - t^2 - \alpha t - \ln t + C + O(t^b e^{-t})
\]
for some constant~$C$.
}
\end{remark}

\subsection{Proof of improved asymptotic upper bound on~$\Fbar$}
\label{S:bound}

\begin{proof}[Proof of \refT{T:newmain}(a)]
Choose $t = w$, apply the Chernoff bound
\[
\Fbar(x) = \mathbb{P}(Z \geq x) \leq e^{-t x} \psi(t),
\]
and utilize Proposition~\ref{P:newerpsibound} to establish~\refT{T:newmain}(a).
\end{proof}

\begin{remark}
\label{Chernoff_remark}
(a)~For 
large~$x$, the optimal choice of~$t$ for the Chernoff bound combined with~\eqref{newerpsibound} is not $t = w$, but rather the larger $\tw \equiv \tw(x)$ of the two positive real solutions to
\[
x = 2 (\tw^{-1} e^{\tw} - \tw) + a.
\]
But the resulting improvement in the bound on $\ln \Fbar(x)$ not only is subsumed by the error bound $O(\log x)$ but in fact is asymptotically equivalent to $2 x^{-1} (\log x)^2 = o(1)$ and so is negligible even as concerns estimating $\Fbar(x)$ to within a factor $1 + o(1)$.

Here is a proof.  Use of $t = w$ vs. $t = \tw$ gives the larger expression
\[
- x w + J(w) - w^2 + a w
\]
vs.
\[
-x \tw + J(\tw) - \tw^2 + a \tw;
\]  
the increase is
\[
\Delta \equiv \Delta(x) := x (\tw - w) - [J(\tw) - J(w)] + (\tw^2 - w^2) - a (\tw - w).
\]
Using Taylor's theorem, we write
\begin{align*}
J(\tw) - J(w) 
&= 2 w^{-1} e^w (\tw - w) + t^{-1} e^t (1 - t^{-1}) (\tw - w)^2 \\
&= x (\tw - w) + (1 + o(1)) \mbox{$\frac{1}{2}$} x (\tw - w)^2
\end{align*}
where~$t$ belongs to $(w, \tw)$, and we also note
\[
\tw^2 - w^2 = 2 (\tw - w) (\tw + w) \sim 2 (\tw - w) \ln x.
\]
Thus
\[
\Delta = - (1 + o(1)) \mbox{$\frac{1}{2}$} x (\tw - w)^2 + (1 + o(1)) 2 (\tw - w) \ln x.
\]

It remains to estimate $\tw - w$.  We have
\begin{align*}
1 
&= \frac{x}{x} = \frac{2 (\tw^{-1} e^{\tw} - \tw) + a}{2 w^{-1} e^w} \\
&= \frac{w}{\tw} e^{\tw - w} - \frac{2 \tw - a}{x}.
\end{align*}
Write this as
\[
\frac{w}{\tw} e^{\tw - w} = 1 + \frac{2 \tw - a}{x}
\]
and take logs.  Note
\[
\ln\left(\frac{w}{\tw} e^{\tw - w}\right) 
= - \ln\left(1 + \frac{\tw - w}{w}\right) + \tw - w \\
\sim \tw - w
\]
and
\[
\ln\left(1 + \frac{2 \tw - a}{x}\right) \sim \frac{2 \ln x}{x}.
\]
Thus
\[
\tw - w \sim 2 x^{-1} \ln x.
\]
It now follows that
\[
\Delta = - (1 + o(1)) 2 x^{-1} (\ln x)^2 + (1 + o(1)) 4 x^{-1} (\ln x)^2
\sim 2 x^{-1} (\ln x)^2,
\]
as claimed.

(b)~If we grant the truth of~\eqref{psi_exact}, the following upper bound on~$\Fbar(x)$ resulting from use of a Chernoff inequality with $t = w$ together with~\eqref{psi_exact} still does not completely match~\eqref{Fbar_exact}:
\begin{align*}
\Fbar(x) 
&\leq \exp[- x w + J(w) - w^2 - \alpha w - \ln w + C + o(1)] \\
&= 2 \sqrt{\pi}\,w^{1/2} e^{w / 2} \times \mbox{RHS\eqref{Fbar_exact}} \sim (2 \pi x)^{1/2} \times \mbox{RHS\eqref{Fbar_exact}}.
\end{align*}
Further, use of the \emph{exactly} optimal~$t$ [ignoring the $o(1)$ remainder term in~\eqref{psi_exact}] gives a bound that is still asymptotically $(2 \pi x)^{1/2} \times \mbox{RHS\eqref{Fbar_exact}}$.
Thus if 
the asymptotic inequality 
$\Fbar(x) \leq \mbox{RHS\eqref{Fbar_exact}}$ is ever to be established rigorously, it would have to involve some technique (such as a rigorization of the saddle-point arguments used in~\cite{knessl1999quicksort}) we have not used; Chernoff bounds are insufficient.
\end{remark}

\subsection{Proof of improved asymptotic upper bounds on absolute values of derivatives of~$F$}
\label{S:(b)}

Using the improved right-tail upper bound of the distribution function in \refT{T:newmain}(a), we are now able to establish \refT{T:newmain}(b).

\begin{proof}[Proof of \refT{T:newmain}(b)]
The bound~\eqref{kRU:improvement} holds for $k = 0$ because it is cruder than the bound of \refT{T:newmain}(a).  The bound~\eqref{kRU:improvement} for general values of~$k$ then follows inductively using Proposition~6.1 of~\cite{fill2019density_tails}, according to which 
\[
\limsup_{x \to \infty} r(x)^{-1} \left( \ln \| \Fbar^{(k + 1)} \|_x -  \ln \| \Fbar^{(k)} \|_x \right) \leq 0
\]
provided $r(x) = \omega(\sqrt{x \log x})$ as $x \to \infty$.
\end{proof}
\ignore{
\begin{remark}
{\rm (a)} {\bf I don't think we can further improve the right tail upper bounds using the Landou--Kolmogorov inequality argument.} The choice of $r(x) \equiv x$ in Proposition 6.1 of~\cite{fill2019density_tails} prevents us getting more terms in $o(x)$. The natural question is if we can choose $r(x) = \log x$. In fact, if we further investigate the term $J(w_i)$, the upper bound we get here is the same as in Theorem~\ref{T:XAmain}.\\
{\rm (b)} The lower bound~\eqref{kRL:improvement} cannot be improved since we have to 
use~\eqref{RL:improvement} in our argument.
\end{remark}
}

\section{Large deviations for {\tt QuickSort}}
\label{S:LD}

With some improvements, this section repeats Section~3 of~\cite{fill2018XA}.

McDiarmid and Hayward~\cite{mcdiarmid1996large} study large deviations for the variant of {\tt QuickSort} in which the pivot (that is, the initial partitioning key) is chosen as the median of $2 t + 1$ keys chosen uniformly at random without replacement from among all the keys.  The case $t = 0$ is the classical {\tt QuickSort} algorithm of our ongoing limited focus in this paper.  Restated equivalently in terms of the random variable $Z_n$ in~\eqref{Zn} (as straightforward calculation reveals), the following is their main theorem for classical {\tt QuickSort}.

\begin{theorem}[\cite{mcdiarmid1996large}]
\label{T:McD_LD}
Let $x_n$ satisfy
\begin{equation}
\label{xnrange}
\frac{\mu_n}{n \ln n} < x_n \leq \frac{\mu_n}{n}.
\end{equation}
Then as $n \to \infty$ we have
\begin{equation}
\label{LD2sided}
\mathbb{P}(|Z_n| > x_n) = \exp\{-x_n [\ln x_n + O(\log \log \log n)]\}.
\end{equation}
\end{theorem}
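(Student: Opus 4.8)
The plan is to split $\mathbb{P}(|Z_n| > x_n)$ into the right tail $R_n := \mathbb{P}(Z_n > x_n)$ and the left tail $L_n := \mathbb{P}(Z_n \le -x_n)$ and to argue that $R_n$ dominates.  When $x_n$ stays bounded, both $R_n$ and $L_n$ are $\exp[O(1)]$ by the weak convergence $Z_n \dto Z$ (recall $Z$ has an everywhere positive density), which is already consistent with~\eqref{LD2sided}.  When $x_n \to \infty$, the event $\{Z_n \le - x_n\}$ forces the internal path length of the random binary search tree far below its mean, which forces near-balanced pivots at many levels; this makes $L_n$ super-exponentially small --- a uniform-in-$n$ analogue of the left-tail bound~\eqref{oldleft}, say $L_n \le \exp[-e^{\Gamma x_n + O(1)}]$ --- hence negligible beside the claimed magnitude of $R_n$.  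So it suffices to prove the two matching bounds
\[
\exp\{-x_n[\ln x_n + O(\log\log\log n)]\} \le R_n \le \exp\{-x_n[\ln x_n - O(\log\log\log n)]\}.
\]

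For the upper bound on $R_n$ I would use the Chernoff inequality $R_n \le e^{-t x_n}\,\mathbb{E}\,e^{t Z_n}$ together with a uniform-in-$n$ bound $\mathbb{E}\,e^{t Z_n} \le \exp[O(e^t) + O(t)]$ for $t \ge 0$, the finite-$n$ analogue of Janson's bound~\eqref{Jansonpsibound}; this can be proved by induction on~$n$ from the {\tt QuickSort} recurrence (patching a Gaussian-type bound valid near $t = 0$, legitimate since $\mathbb{E}Z_n = 0$ and $\sup_n \Var Z_n < \infty$, with the exponential bound for large~$t$, keeping all constants free of~$n$), or quoted from the literature.  Optimizing at $t \approx \ln x_n$ then gives $R_n \le \exp[-x_n \ln x_n + O(x_n)]$, which is more than enough: only $O(1)$ slack is needed on this side.

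The lower bound $R_n \ge \exp\{-x_n[\ln x_n + O(\log\log\log n)]\}$ is where the real work lies.  For bounded $x_n$ it follows at once from $Z_n \dto Z$, so assume $x_n \to \infty$.  Set $k := x_n + o(x_n) + O(\log\log\log n)$ and describe a favorable way to grow the tree: at each of the first $k$ levels require the pivot to land among the top $s \asymp n / (x_n\, \varpi_n)$ of the currently remaining keys (for a suitably slowly growing $\varpi_n$), and recurse into the larger resulting subproblem.  Each such step has conditional probability at least $s/n \asymp 1 / (x_n \varpi_n)$; it removes fewer than $s$ keys, so after $k$ steps the total removed is $O(n/\varpi_n) = o(n)$, the surviving subproblem still has size $(1-o(1))n$, and the accumulated recursion coefficients satisfy $\prod_j (1 - r_j/n_j) = 1 - o(1)$; and since the split ratio is $1 - o(1)$, the step contributes to $Z_n$ --- through the discrete analogue of the function $g$ in~\eqref{distid} --- an amount tending to $g(1) = 1$.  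Simultaneously I require each of the $\le k$ discarded small subtrees and the final size-$(1-o(1))n$ subtree to have rescaled {\tt QuickSort} value at least $-C$ with $C := O(\log\log\log n)$; as these are independent ordinary {\tt QuickSort} variables, a union bound with a uniform-in-$n$ left-tail estimate $\mathbb{P}(Z_m < -y) \le \exp[-e^{\Gamma y + O(1)}]$ shows this has probability at least $\tfrac12$, precisely because making $k\,\exp[-e^{\Gamma C}] \le \tfrac12$ (with $k = O(\log n)$) forces only $C = \Theta(\log\log\log n)$ --- this is the origin of the triple logarithm in~\eqref{LD2sided}.  On this event $Z_n \ge (1 - o(1))k - O(\log\log\log n) \ge x_n$, with probability at least $\tfrac12\,(x_n \varpi_n)^{-k} = \exp[-k\,\ln(x_n\varpi_n) - O(1)]$; choosing $\varpi_n$ so that $\ln \varpi_n$ and the $o(\cdot)$ corrections to $k$ add only $o(x_n \log\log\log n)$ to the exponent (using $x_n \le \mu_n/n$, which bounds $\ln x_n \le \ln\ln n + O(1)$), this equals $\exp\{-x_n[\ln x_n + O(\log\log\log n)]\}$.

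The delicate points, in order of difficulty, are: (i) the lower-bound construction --- making each step cost only a factor $\asymp 1/x_n$ in probability while keeping the discarded side's contribution to $Z_n$ under control, and carrying the bookkeeping so the accumulated errors stay within $O(x_n\log\log\log n)$; (ii) isolating $C = \Theta(\log\log\log n)$ as exactly the deficit budget making the union bound over the $O(\log n)$ discarded subtrees cost only a constant factor, which pins down the triple-log term; and (iii) supplying the two uniform-in-$n$ inputs (the mgf bound and the left-tail bound), which mirror the limiting-distribution statements in~\cite{janson2015tails} but must be reproved with constants independent of~$n$.  As an alternative to (i)--(ii) one could instead prove a sharper right-tail large-deviation estimate for $Z_n$ directly --- e.g.\ by feeding a uniform-in-$n$ refinement of \refP{P:newerpsibound} into the Chernoff bound as in \refS{S:bound} --- and obtain~\eqref{LD2sided} as a corollary.
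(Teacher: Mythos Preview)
This theorem is not proved in the paper; it is quoted from McDiarmid and Hayward~\cite{mcdiarmid1996large}.  The paper only describes their method in passing: the upper half of~\eqref{LD2sided} is obtained via the method of bounded differences, and the lower half via a separate constructive argument establishing the right-tail lower bound~\eqref{LDRL} (their Lemma~2.9).

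Your route for the upper bound---a Chernoff inequality driven by a uniform-in-$n$ mgf bound---is genuinely different from bounded differences, and is in fact the approach the present paper endorses in its final remark of \refS{S:LD}: by \cite[Theorem~7.1]{fill2002quicksort} the limiting mgf~$\psi$ majorizes the mgf of $\hZ_n := \tfrac{n}{n+1} Z_n$ for every~$n$, so Chernoff yields upper bounds on both $\mathbb{P}(\hZ_n > x)$ and $\mathbb{P}(\hZ_n \le -x)$ uniformly in~$n$, with no restriction on~$x$.  That single observation disposes of both your right-tail upper bound and your left-tail negligibility claim at once, and is sharper and cleaner than bounded differences; so your point~(iii) is already available off the shelf rather than something to be reproved.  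Your lower-bound construction---forcing roughly $x_n$ extreme pivots while budgeting $C = \Theta(\log\log\log n)$ for the discarded subtrees via a union bound---is the right mechanism and matches in spirit what McDiarmid and Hayward do for~\eqref{LDRL}; as you acknowledge, the bookkeeping in your point~(i) (keeping the accumulated errors within $O(x_n \log\log\log n)$, in particular controlling the $o(x_n)\ln x_n$ cross-term when $x_n$ is near the top of the range~\eqref{xnrange}) is where a complete proof would have to do real work that your sketch does not carry out.
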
  
Observe that~\eqref{xnrange} is roughly equivalent to the condition that $x_n$ lie between~$2$ and 
$2 \ln n$, and rather trivially the range can be extended to $1 < x_n \leq \mu_n / n$.  But notice also that if $x_n = (\ln \ln n)^{c_n}$ with $c_n$ nondecreasing (say), then~\eqref{LD2sided} provides a nontrivial upper bound on $\mathbb{P}(|Z_n| > x_n)$ if and only if $c_n \to \infty$.

McDiarmid and Hayward require a fairly involved proof utilizing primarily the method of bounded differences pioneered by McDiarmid~\cite{mcdiarmid1989method} to establish the $\leq$ half of~\eqref{LD2sided}.  The $\geq$ half is proven by establishing (by means of another substantial argument) the right-tail lower bound
\begin{equation}
\label{LDRL}
\mathbb{P}(Z_n > x_n) \geq \exp\{-x_n [\ln x_n + O(\log \log \log n)]\},
\end{equation}
again assuming~\eqref{xnrange} (see~\cite[Lemma~2.9]{mcdiarmid1996large}).  It follows from~\eqref{LD2sided}--\eqref{LDRL} that we have the right-tail large deviation result that
\begin{equation}
\label{LDR}
\mathbb{P}(Z_n > x_n) = \exp\{-x_n [\ln x_n + O(\log \log \log n)]\}.
\end{equation}

The main point of this section [see \refT{T:LDR}(b)--(d)] is to note that~\eqref{LDR} can be refined, for deviations not allowed to be quite as large as those permitted by \refT{T:McD_LD}, rather effortlessly by combining
our upper bound [\refT{T:newmain}(a)] and lower bound [\refT{T:XAmain}(b), with $k = 0$] on the right tail of~$F$ 
with the following bound on Kolmogorov--Smirnov distance between the distributions of $Z_n$ and~$Z$ (see \cite[Section~5]{fill2002quicksort}):

\begin{lemma}[\cite{fill2002quicksort}]
\label{L:KS}
We have
\[
\sup_x |\mathbb{P}(Z_n > x) - \mathbb{P}(Z > x)| 
\leq \exp\left[- \mbox{$\frac{1}{2}$} \ln n + O\left ((\log n)^{1/2} \right)\right].
\] 
\end{lemma}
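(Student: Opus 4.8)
The statement is the assertion that the Kolmogorov distance between $Z_n$ and~$Z$ is $\exp[-\tfrac12\ln n + O((\log n)^{1/2})]$, an $n^{-1/2+o(1)}$ rate.  The plan is to put $Z_n$ and~$Z$ on a common probability space, establish exponential concentration of their difference at scale $n^{-1/2}$, and then convert this into a Kolmogorov bound using that the limiting law has a bounded density.

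First I would use R\'egnier's martingale: set $M_n := (X_n - \mu_n)/(n+1) = \tfrac{n}{n+1} Z_n$.  R\'egnier~\cite{regnier1989limiting} showed that $(M_n)_{n\ge0}$ is a martingale for the natural filtration $(\cG_n)$ of the recursive-partitioning (binary-search-tree) process and that $M_n \to Z$ almost surely and in every~$L^p$; hence $Z$ and all the $Z_n$ live on one space, $M_n = \E[Z\mid\cG_n]$, and $Z - M_n = \sum_{m\ge n}(M_{m+1}-M_m)$.  The quantitative core is a bound on the increments.  Writing $D_{m+1}$ for the number of comparisons used to insert the $(m+1)$-st key (its depth in the tree), one has $X_{m+1} = X_m + D_{m+1}$, and a short computation puts the increment into the clean martingale-difference form
\[
M_{m+1} - M_m = \frac{D_{m+1} - \E[D_{m+1}\mid\cG_m]}{m+2}, \qquad \E[D_{m+1}\mid\cG_m] = \frac{X_m + 2m}{m+1}.
\]
Using the classical tail bounds $\P(D_{m+1} > a\ln m) \le m^{-b(a)}$ with $b(a)\to\infty$ as $a\to\infty$, together with $X_m = O(m\log m)$ with overwhelming probability, one gets $|M_{m+1}-M_m| \le c_1(\log m)/m$ off an event $B_m$ with $\P(B_m) \le m^{-2}$.

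Second, since $\sum_{m\ge n}((\log m)/m)^2 = O((\log n)^2/n)$ and $\P\big(\bigcup_{m\ge n}B_m\big) = O(n^{-1})$, a truncated Azuma--Hoeffding bound on the martingale $(M_m - M_n)_{m\ge n}$ (frozen at the first occurrence of some $B_m$), followed by letting the truncation level tend to infinity and absorbing the $O((\log n)/n)$ gap between $Z_n$ and $M_n$, yields for every $\lambda > 0$
\[
\P(|Z - Z_n| > \lambda) \le \exp\!\Big[-\frac{c_2\,\lambda^2 n}{(\log n)^2}\Big] + O(n^{-1}).
\]
Now invoke the Fill--Janson fact~\cite{fill2000smoothness} that $Z$ has a bounded continuous density, with $M := \sup f < \infty$.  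Since $\{Z_n > x\}\subseteq\{Z > x-\lambda\}\cup\{|Z-Z_n|>\lambda\}$ and $\{Z > x+\lambda\}\subseteq\{Z_n > x\}\cup\{|Z-Z_n|>\lambda\}$, the bounded density gives
\[
\sup_x|\P(Z_n > x) - \P(Z > x)| \le M\lambda + \P(|Z-Z_n|>\lambda).
\]
Combining the last two displays and taking $\lambda$ a sufficiently large multiple of $(\log n)^{3/2} n^{-1/2}$ makes the exponential term at most $n^{-1/2}$, so the Kolmogorov distance is $O\big((\log n)^{3/2} n^{-1/2}\big) = \exp[-\tfrac12\ln n + O(\log\log n)]$, which is even a bit stronger than claimed.

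The hard part is the increment step: one needs both the single-insertion identification of $M_{m+1}-M_m$ and tail bounds on $D_{m+1}$ (and on $X_m$) strong enough that the exceptional probabilities are summable and $o(n^{-1/2})$.  These ingredients are standard for binary search trees but are where the real work is.  For contrast, the cheaper route of running the contraction method directly on the recurrence for $Z_n$ and the fixed-point identity~\eqref{distid} --- where $\E[U^2 + (1-U)^2] = \tfrac23 < 1$ makes an order-$2$ metric such as the minimal-$L^2$ metric genuinely contractive, the coupling errors being $O((\log n)/n)$ arising from $|\mu_k - 2k\ln k - (2\gamma-4)k| = O(\log k)$ and from the logarithmic blow-up of $g'$ at $0$ and~$1$ --- yields only $\|Z - Z_n\|_2 = O((\log n)^{1/2}n^{-1/2})$; pushing that through the sandwiching with Chebyshev in place of Azuma gives merely an $n^{-1/3}$ rate, so the exponential concentration is genuinely needed for the $n^{-1/2}$ exponent.
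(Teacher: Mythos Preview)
The paper does not itself prove this lemma; it is quoted from Fill and Janson~\cite[Section~5]{fill2002quicksort} and used as a black box.  Your sketch reconstructs essentially the strategy of that reference: couple $Z_n$ and~$Z$ via R\'egnier's martingale, obtain sub-Gaussian–type concentration for $Z - Z_n$ from control of the increments $M_{m+1}-M_m$, and then sandwich using the bounded limiting density.  So at the level of approach there is nothing to compare---you are doing what the cited source does.

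One step is looser than it looks.  The increment $M_{m+1}-M_m=(D_{m+1}-\E[D_{m+1}\mid\cG_m])/(m+2)$ is \emph{not} almost surely bounded by $c_1(\log m)/m$: conditionally on a bad tree it can be of order~$1$.  ``Freezing at the first~$B_m$'' does not automatically cure this, because the event $B_m$ you describe (depending on $D_{m+1}$) lies in $\cG_{m+1}$, so the stopped process can still take one large step at the stopping time.  The clean repair is to stop on a $\cG_m$-measurable event that controls the \emph{next} increment---for instance the first $m\ge n$ at which the tree height exceeds $C\log m$, since then $D_{m+1}\le H_m+1$ and $\E[D_{m+1}\mid\cG_m]\le H_m+2$ deterministically on $\{m<\tau\}$---after which Azuma applies to the stopped martingale with genuinely bounded increments and $\P(\tau<\infty)$ is controlled by the standard height tail bounds.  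Fill and Janson instead go through explicit $L^p$ bounds on $Z-M_n$ (with tracked dependence on~$p$) and optimize over~$p$, which is morally the same device.  Either route yields the stated lemma; your asserted sharpening of the exponent from $O((\log n)^{1/2})$ to $O(\log\log n)$ would require carrying the constants through whichever argument you pick, and you have not done that bookkeeping.
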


We state next our right-tail large-deviations theorem for {\tt QuickSort}.  With the additional indicated restriction on the growth of $x_n$ (which allows for $x_n$ nearly as large as 
$\frac{1}{2} \frac{\ln n}{\ln \ln n}$), parts (b)--(c) strictly refine~\eqref{LDRL} and the asymptotic upper bound on $\mathbb{P}(Z_n > x_n)$ implied by~\eqref{LDR}.  The left-hand endpoint of the interval $I_n$ in \refT{T:LDR} is chosen as $c > 1$ simply to ensure that $\sup\{- \ln \ln x:x \in I_n\} < \infty$.
\begin{theorem}
\label{T:LDR}
Let $(\omega_n)$ be any sequence diverging to $+ \infty$ as $n \to \infty$ and let $c > 1$. 
For integer $n \geq 3$, consider the interval 
$I_n := \left[c, \frac{1}{2} \frac{\ln n}{\ln \ln n}\!\left( 1 - \frac{\omega_n}{\ln \ln n} \right) \right]$.
\vspace{.01in}
\par\noindent
{\rm (a)}~Uniformly for $x \in I_n$ we have
\begin{equation}
\label{moderate}
\mathbb{P}(Z_n > x) = (1 + o(1)) \mathbb{P}(Z > x) \quad \mbox{as $n \to \infty$}.
\end{equation}
{\rm (b)}~If $x_n \in I_n$ for all large~$n$, then
\begin{equation}
\label{LDRLnew}
\mathbb{P}(Z_n > x_n) \geq \exp[- x_n \ln x_n - x_n \ln \ln x_n + O(x_n)].
\end{equation}
{\rm (c)}~If $x_n \in I_n$ for all large~$n$ and $x_n \to \infty$, then
\begin{align}
\label{LDRUnewer}
\mathbb{P}(Z_n > x_n) 
&\leq \exp[- x_n w_n + J(w_n) - w_n^2 + O(\log x_n)] \\
\label{LDRUnew} 
&= \exp[- x_n \ln x_n - x_n \ln \ln x_n + (1 + \ln 2)x_n + o(x_n)],
\end{align}
where $w_n$ is the larger of the two real solutions to $x_n = 2 w_n^{-1} e^{w_n}$. \\
{\rm (d)}~If $x_n \in I_n$ for all large~$n$, then
\begin{equation}
\label{LDRnew} 
\mathbb{P}(Z_n > x_n) = \exp[- x_n \ln x_n - x_n \ln \ln x_n + O(x_n)].
\end{equation}
\end{theorem}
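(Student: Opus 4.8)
The plan is to transfer the known right-tail estimates for the limiting distribution to the finite-$n$ tails $\mathbb{P}(Z_n > x)$ by means of the Kolmogorov--Smirnov bound of \refL{L:KS}, and then to extract (b)--(d) as near-immediate consequences of (a). Before anything else I would observe that we may assume $\omega_n = o(\ln\ln n)$: replacing $\omega_n$ by $\min\{\omega_n, \sqrt{\ln\ln n}\}$ keeps it divergent while only enlarging $I_n$, which strengthens every assertion. In particular this makes the right endpoint $x_n^\star := \tfrac12\tfrac{\ln n}{\ln\ln n}\bigl(1 - \tfrac{\omega_n}{\ln\ln n}\bigr)$ of $I_n$ tend to $\infty$, so that the asymptotic upper bound \refT{T:newmain}(a) and the asymptotic lower bound \refT{T:XAmain}(b) with $k = 0$ --- the latter reading $\Fbar(x) = \|\Fbar\|_x \ge \exp[-x\ln x - x\ln\ln x + O(x)]$, since $\Fbar$ is nonincreasing and nonnegative --- are both available at $x = x_n^\star$.

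The real content is part~(a). Write $\eps_n := \exp[-\tfrac12\ln n + O((\log n)^{1/2})]$ for the error in \refL{L:KS}, so that $|\mathbb{P}(Z_n > x) - \Fbar(x)| \le \eps_n$ for every $x$; since $\Fbar$ is nonincreasing it then suffices to prove the single estimate $\eps_n = o(\Fbar(x_n^\star))$, for this yields $\mathbb{P}(Z_n > x) = \Fbar(x)[1 + O(\eps_n/\Fbar(x_n^\star))] = (1+o(1))\Fbar(x)$ uniformly for $x \in I_n$, which is~\eqref{moderate}. To bound $\Fbar(x_n^\star)$ below I would feed the elementary expansion $\ln x_n^\star = \ln\ln n - \ln\ln\ln n - \ln 2 + \ln(1 - \omega_n/\ln\ln n)$, together with the resulting expansion of $\ln\ln x_n^\star$, into \refT{T:XAmain}(b). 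The crucial feature of that computation --- and the step I expect to be the most delicate --- is that the contributions of order $\tfrac{\ln n \cdot \ln\ln\ln n}{\ln\ln n}$ arising from the two logarithmic factors exactly cancel, leaving
\[
-\ln\Fbar(x_n^\star) \le x_n^\star\bigl(\ln x_n^\star + \ln\ln x_n^\star\bigr) + O(x_n^\star) = \tfrac12\ln n - \frac{\omega_n\ln n}{2\ln\ln n} + O\Bigl(\frac{\ln n}{\ln\ln n}\Bigr).
\]
Since $(\log n)^{1/2} = o(\ln n/\ln\ln n)$, this gives $\eps_n/\Fbar(x_n^\star) \le \exp\bigl[-\tfrac{\ln n}{\ln\ln n}\bigl(\tfrac{\omega_n}{2} - O(1)\bigr)\bigr] \to 0$, because $\omega_n \to \infty$. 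This also explains the otherwise odd-looking factor $1 - \omega_n/\ln\ln n$ in $I_n$: it is exactly what opens a gap of exponential order $\omega_n\ln n/(2\ln\ln n)$, large enough to swamp simultaneously the $O(\ln n/\ln\ln n)$ slack in the lone lower bound we have on $\Fbar$ and the $O((\log n)^{1/2})$ loss in \refL{L:KS}.

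Granting (a), the rest is assembly. For (b): when $x_n$ is large, combine (a) with \refT{T:XAmain}(b) ($k = 0$) to get $\mathbb{P}(Z_n > x_n) = (1+o(1))\Fbar(x_n) \ge \exp[-x_n\ln x_n - x_n\ln\ln x_n + O(x_n)]$, the factor $1+o(1)$ being absorbable since $x_n \ge c$; when $x_n$ stays bounded, both $\mathbb{P}(Z_n > x_n) = (1+o(1))\Fbar(x_n)$ and the claimed right-hand side lie between fixed positive constants (here one uses $c > 1$, so $\ln\ln x_n$ is bounded below), and the inequality holds after enlarging the $O$-constant, again using $x_n \ge c$. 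For (c), assuming $x_n \to \infty$: (a) and \refT{T:newmain}(a) give $\mathbb{P}(Z_n > x_n) = (1+o(1))\Fbar(x_n) \le \exp[-x_n w_n + J(w_n) - w_n^2 + O(\log x_n)]$, which is~\eqref{LDRUnewer}; then substituting $w_n = \ln x_n + \ln\ln x_n - \ln 2 + o(1)$ from~\eqref{wexpansion}, $J(w_n) = x_n + o(x_n)$ from~\eqref{Jequiv}, and noting $w_n^2 = o(x_n)$ and $O(\log x_n) = o(x_n)$ --- the same bookkeeping as in \refR{R:truth}(a) --- turns~\eqref{LDRUnewer} into~\eqref{LDRUnew}. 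Finally (d) is the lower bound of (b) combined with the upper bound $\mathbb{P}(Z_n > x_n) \le \exp[-x_n\ln x_n - x_n\ln\ln x_n + O(x_n)]$ valid for all $x_n \in I_n$ --- namely~\eqref{LDRUnew} with $(1+\ln 2)x_n$ absorbed when $x_n$ is large, and the trivial comparison of two constant-order quantities when $x_n$ is bounded --- which yields~\eqref{LDRnew}.

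I expect essentially all the difficulty to be concentrated in the lower estimate of $\Fbar(x_n^\star)$ in part~(a): getting the $\ln\ln\ln n$-order terms to cancel and then tracking the surviving $O(\ln n/\ln\ln n)$ remainder carefully enough to see that the Kolmogorov--Smirnov error is genuinely of smaller order. Everything after that --- the transfer from $\mathbb{P}(Z_n > \cdot)$ to $\Fbar$, the re-expression of the $w_n$-bound in the $x_n\ln x_n$ form, and the separate handling of bounded $x_n$ --- is bookkeeping that follows the pattern already set in \refR{R:truth}(a) and in the remark preceding the theorem.
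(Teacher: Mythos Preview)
Your proposal is correct and follows essentially the same route as the paper: reduce (b)--(d) to (a) together with the tail bounds on~$F$ (\refT{T:XAmain}(b) for the lower bound, \refT{T:newmain}(a) for the upper), and prove (a) by showing the Kolmogorov--Smirnov error of \refL{L:KS} is $o(\Fbar(x_n^\star))$ via the same logarithm expansion and cancellation, after first assuming without loss that $\omega_n = o(\ln\ln n)$. You are in fact slightly more explicit than the paper in separating off the bounded-$x_n$ case in (b) and (d), which the paper glosses over with ``follow immediately.''
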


\begin{proof}
Parts (b)--(c) follow immediately from part~(a) and
\refT{T:newmain}(a),
and part~(d) by combining parts (b)--(c).  
So we need only prove part~(a), for which by \refL{L:KS} it is sufficient to prove that
\[
\exp\left[- \mbox{$\frac{1}{2}$} \ln n + O\left ((\log n)^{1/2} \right)\right] \leq o(\mathbb{P}(Z > x_n))
\]
with $x_n \equiv \frac{1}{2} \frac{\ln n}{\ln \ln n}\!\left( 1 - \frac{\omega_n}{\ln \ln n} \right)$; this assertion decreases in strength as 
the choice of $\omega_n$ is increased, so we may assume that $\omega_n = o(\log \log n)$.  Since, by \refT{T:XAmain}(b), we have
\[
\mathbb{P}(Z > x_n) \geq \exp[- x_n \ln x_n - x_n \ln \ln x_n + O(x_n)],
\]  
it suffices to show that for any constant $C < \infty$ we have
\[
- \mbox{$\frac{1}{2}$} \ln n + C (\ln n)^{1/2} + x_n \ln x_n + x_n \ln \ln x_n + C x_n \to - \infty.
\]
But, writing~$\L$ for $\ln$ and $\L_k$ for the $k$th iterate of~$\L$, and abbreviating $\alpha_n := 
1 - \frac{\omega_n}{\L_2 n}$, this follows from the observation that, for~$n$ large,
\begin{align*}
\lefteqn{x_n (\L x_n + \L_2 x_n + C)} \\ 
&= \frac{1}{2} \frac{\L n}{\L_2 n} \alpha_n 
[(\L_2 n - \L_3 n - \L 2 + \L \alpha_n) + \L (\L_2 n - \L_3 n - \L 2 + \L \alpha_n) + C] \\ 
&= \frac{1}{2} \frac{\L n}{\L_2 n} \alpha_n \left[ \L_2 n + C - \L 2 + \L \alpha_n
+ \L\left(1 - \frac{\L_3 n + \L 2 - \L \alpha_n}{\L_2 n}\right) \right] \\
&= \frac{1}{2} \frac{\L n}{\L_2 n} \alpha_n \left[ \L_2 n + C - \L 2 + \L \alpha_n
- (1 + o(1)) \frac{\L_3 n}{\L_2 n} \right] \\
&= \frac{1}{2} \frac{\L n}{\L_2 n} \alpha_n \left[ \L_2 n + C - \L 2 + o(1) \right] \\
&= \left( \frac{1}{2} \L n \right) \alpha_n \left[ 1 + \frac{C - \L 2 + o(1)}{\L_2 n} \right]
= \frac{1}{2} \L n - (1 + o(1)) \omega_n \frac{\L n}{2 \L_2 n}.~\qed
\end{align*}
\noqed
\end{proof}

For completeness we next present a left-tail analogue of \refT{T:LDR} [but, for brevity, only parts (b)--(c) thereof].  \refT{T:LDL} follows in similar fashion using the case $k = 0$ of~\eqref{oldkleft} in place of \refT{T:XAmain}(b).  No such left-tail large-deviation result is found in \cite{mcdiarmid1996large}.  Recall 
$\Gamma := (2-\frac{1}{\ln 2})^{-1}$ and the notation $\L_k$ used in the proof of \refT{T:LDR}.
\begin{theorem}
\label{T:LDL}
If $1 < x_n \leq \Gamma^{-1} (\L_2 n - \L_4 n - \omega_n)$ with $\omega_n \to \infty$, then
\[
\exp\left[-e^{\Gamma x_n + \L_2 x_n +O(1)}\right]
\leq \mathbb{P}(Z_n \leq - x_n)
\leq \left[-e^{\Gamma x_n + O(1)}\right].
\]
\end{theorem}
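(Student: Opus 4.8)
The plan is to run the argument of \refT{T:LDR} with the left‑tail estimates substituted for the right‑tail ones: I would use the $k = 0$ case of~\eqref{oldkleft} (equivalently, the bounds on $f(-x)$ in~\eqref{oldleft}) in place of \refT{T:newmain}(a) and \refT{T:XAmain}(b), while keeping the Kolmogorov--Smirnov bound of \refL{L:KS}, which does not care which tail we are in. Since $F$ is nondecreasing, $\| \Fu^{(0)} \|_x = F(-x) = \mathbb{P}(Z \le -x)$, so the $k = 0$ case of~\eqref{oldkleft} reads
\[
\exp\bigl[-e^{\Gamma x + \L_2 x + O(1)}\bigr] \le \mathbb{P}(Z \le -x) \le \exp\bigl[-e^{\Gamma x + O(1)}\bigr] \quad \text{as } x\to\infty,
\]
writing $\L_k$ for the $k$th iterate of $\L = \ln$ as in the proof of \refT{T:LDR}.

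The key step is the left‑tail analogue of \refT{T:LDR}(a): for any sequence $(x_n)$ with $1 < x_n \le \Gamma^{-1}(\L_2 n - \L_4 n - \omega_n)$ one should have $\mathbb{P}(Z_n \le -x_n) = (1 + o(1))\,\mathbb{P}(Z \le -x_n)$. By \refL{L:KS} it suffices to show $\exp[-\tfrac12 \ln n + O((\log n)^{1/2})] = o(\mathbb{P}(Z \le -x_n))$, and, the case of bounded $x_n$ being trivial (the left side tends to~$0$ while $\mathbb{P}(Z \le -x_n)$ stays bounded below), for this I would use $\mathbb{P}(Z \le -x_n) \ge \exp[-e^{\Gamma x_n + \L_2 x_n + O(1)}]$ and check that $-\tfrac12 \ln n + O((\log n)^{1/2}) + e^{\Gamma x_n + \L_2 x_n + O(1)} \to -\infty$. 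Here the hypothesis forces $x_n \le \Gamma^{-1}\L_2 n$, hence $\L x_n \le \L_3 n + O(1)$ and $\L_2 x_n \le \L_4 n + O(1)$, so $\Gamma x_n + \L_2 x_n \le \L_2 n - \omega_n + O(1)$ and therefore $e^{\Gamma x_n + \L_2 x_n + O(1)} \le e^{O(1) - \omega_n}\,\ln n = o(\ln n) \le \tfrac14 \ln n$ for large~$n$; the displayed quantity is then at most $-\tfrac14 \ln n + O((\log n)^{1/2}) \to -\infty$.

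Granting this, the two inequalities of \refT{T:LDL} follow by feeding the two bounds of~\eqref{oldkleft} through $\mathbb{P}(Z_n \le -x_n) = (1 + o(1))\,\mathbb{P}(Z \le -x_n)$, exactly as parts (b)--(c) of \refT{T:LDR} were deduced from its part~(a): the lower bound gives $\mathbb{P}(Z_n \le -x_n) \ge \exp[-e^{\Gamma x_n + \L_2 x_n + O(1)}]$ and the upper bound gives $\mathbb{P}(Z_n \le -x_n) \le \exp[-e^{\Gamma x_n + O(1)}]$, the stray factor $1 + o(1)$ being absorbed into the additive $O(1)$ inside the innermost exponent when $x_n \to \infty$ (and the bounded regime being again trivial).

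I expect essentially all the work to be the uniformity bookkeeping in the part‑(a) analogue: confirming that the precise hypothesis $x_n \le \Gamma^{-1}(\L_2 n - \L_4 n - \omega_n)$ is exactly what is needed to push $e^{\Gamma x_n + \L_2 x_n}$ below $\tfrac12 \ln n$ with enough slack to beat the $O((\log n)^{1/2})$ error in \refL{L:KS}, that the $O(1)$ in~\eqref{oldkleft} is uniform over the relevant range of arguments, and that the bounded‑$x_n$ regime (where the asymptotic form of~\eqref{oldkleft} does not literally apply) is disposed of for free. Everything past that point is a transcription of the \refT{T:LDR} proof.
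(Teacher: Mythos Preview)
Your proposal is correct and follows precisely the approach the paper indicates: the paper's own ``proof'' of \refT{T:LDL} is the single sentence that it ``follows in similar fashion using the case $k = 0$ of~\eqref{oldkleft} in place of \refT{T:XAmain}(b),'' and you have carried out exactly that substitution, correctly verifying that the hypothesis $x_n \le \Gamma^{-1}(\L_2 n - \L_4 n - \omega_n)$ forces $e^{\Gamma x_n + \L_2 x_n + O(1)} = o(\ln n)$ so that the Kolmogorov--Smirnov error from \refL{L:KS} is negligible relative to $\mathbb{P}(Z \le -x_n)$.
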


\begin{remark}
The upper bound in \refT{T:LDL} requires only the weaker restriction
\[
-M \leq x_n \leq \Gamma^{-1} (\L_2 n - \omega_n)
\]
with $M < \infty$ and $\omega_n \to \infty$.
\end{remark}

\begin{remark}
If we let $N := n + 1$ and study the slight modification $\hZ_n := (X_n - \mu_n) / N = [n / (n + 1)] Z_n$ instead of~\eqref{Zn}, then large deviation upper bounds based on tail estimates of the limiting~$F$ have broader applicability and are easier to derive, too.  The reason is that (i)~both \refT{T:newmain}(a) and the upper bound for $k = 0$ in~\eqref{oldkleft} have been derived by establishing an upper bound on the limiting mgf~$\psi$ and using a Chernoff bound, and (ii)~according to 
\cite[Theorem~7.1]{fill2002quicksort}, $\psi$ majorizes the \mgf\ $\widehat{\psi}_n$ of $\hZ_n$ for every~$n$.  It follows immediately (with~$w$ defined in the now-familiar way in terms of~$x$) that $\mathbb{P}(\hZ_n > x)$ (respectively, $\mathbb{P}(\hZ_n \leq -x)$) is bounded above uniformly in~$n$ by
\begin{align}
\lefteqn{\hspace{-.7in}\exp[- x w + J(w) - w^2 + O(\log x)]} \\
\label{cruder} 
&= \exp[-x \ln x - x \ln \ln x + (1 + \ln 2) x + o(x)]
\end{align}
(resp.,\ by $\exp \left[-e^{\gamma x + O(1)}\right]$) as $x \to \infty$; there is \emph{no restriction at all} on how large~$x$ can be in terms of~$n$. 

Here are examples of \emph{very} large values of~$x$ for which the tail probabilities are nonzero and the aforementioned bounds still match logarithmic asymptotics to lead order of magnitude, albeit not to lead-order term.  Let $\lg$ denote binary log.  The largest possible value of $X_n$ is ${n \choose 2}$ (corresponding to any binary search tree which is a path), which occurs with probability $2^{n - 1} / n!$.  The smallest possible value (supposing, for simplicity, that~$n = 2^k - 1$ for integer~$k$) is $(k - 2) 2^k + 2 = N (\lg N - 2) + 2$ (corresponding to the perfect tree, in the terminology of \cite[Section~3]{dobrow1996multiway}); according to \cite[Proposition~4.1]{dobrow1996multiway}, this value occurs with probability $\exp[-s(1) N + s(N + 1)]$, where
\[
s(\nu) := \sum_{j = 1}^{\infty} 2^{-j} \ln(2^j \nu - 1).
\]
Correspondingly, the largest possible value of $\hZ_n$ is
\[
\lambda_n := \mbox{$\frac{n (n + 7)}{2 (n + 1)}$} - 2 H_n 
= (1 + o(1)) \mbox{$\frac{1}{2}$} N, 
\]
and the smallest is $- \sigma_n$, with
\[
\sigma_n := -  2 H_N - \lg N - 2 
= (2 - \mbox{$\frac{1}{\ln 2}$}) \ln N + O(1).
\]

The bound~\eqref{cruder} on $\mathbb{P}(\hZ_n > \lambda_n)$ is in fact also (by the same proof) a bound on the larger probability $\mathbb{P}(\hZ_n \geq \lambda_n)$, and equals
\[
\exp\left\{- \mbox{$\frac{1}{2}$} N [\ln N + \ln \ln N - (2 \ln 2 + 1) + o(1)]\right\},
\]
whereas (using Stirling's formula) the truth is
\[
\mathbb{P}(\hZ_n \geq \lambda_n) = \exp[ - N \ln N + (1 + \ln 2) N + O(\log N)].
\]

The bound on $\mathbb{P}(\hZ_n \leq -\sigma_n)$ equals
\[
\exp\left[ - e^{\ln N + O(1)} \right] = \exp[-\Omega(N)],
\]
whereas (by \cite[Proposition~4.1 and Table~1]{dobrow1996multiway}) the truth is
\[
\mathbb{P}(\hZ_n \leq -\sigma_n) = \exp[-s(1) N + O(\log N)]
\]
and (rounded to seven decimal places) $s(1) = 0.9457553$.
\end{remark}

\bibliography{bib_file}
\bibliographystyle{plain}

\end{document}